\def\bt{\begin{thm}}
\def\et{\end{thm}}
\def\bl{\begin{lem}}
\def\el{\end{lem}}
\def\bd{\begin{defn}}
\def\ed{\end{defn}}
\def\bc{\begin{cor}}
\def\ec{\end{cor}}
\def\bp{\begin{proof}}
\def\ep{\end{proof}}
\def\br{\begin{rem}}
\def\er{\end{rem}}
\newtheorem{thm}{Theorem}[section]
\newtheorem{prop}[thm]{Proposition}
\newtheorem{lem}[thm]{Lemma}
\newtheorem{defn}[thm]{Definition}
\newtheorem{rem}[thm]{Remark}
\newtheorem{cor}[thm]{Corollary}
\numberwithin{equation}{section}
\newcommand{\bthm}{\begin{thm}}
\newcommand{\ethm}{\end{thm}}
\newcommand{\bstp}{\begin{stp}}
\newcommand{\estp}{\end{stp}}
\newcommand{\blemma}{\begin{lemma}}
\newcommand{\elemma}{\end{lemma}}
\newcommand{\bprop}{\begin{prop}}
\newcommand{\eprop}{\end{prop}}
\newcommand{\bpf}{\begin{pf}}
\newcommand{\epf}{\end{pf}}
\newcommand{\bdefn}{\begin{defn}}
\newcommand{\edefn}{\end{defn}}
\newcommand{\brk}{\begin{rmrk}}
\newcommand{\erk}{\end{rmrk}}
\newcommand{\bcrl}{\begin{crl}}
\newcommand{\ecrl}{\end{crl}}
\title[]{Asymptotic mass distribution  of random holomorphic sections}
\address{}
\address{Faculty of Engineering and Natural Sciences, Sabanc{\i} University, \.{I}stanbul, Turkey}
\email{tbayraktar@sabanciuniv.edu}
\email{afrimbojnik@sabanciuniv.edu}
\date{\today}
\keywords{Random holomorphic sections, mass distribution, asymptotic normality, Bergman kernel asymptotics, compact K\"{a}hler manifolds.}
\subjclass[2020]{Primary: 32A60, 60F05, 32A25; Secondary: 53C55}
\begin{document}

\author{Turgay Bayraktar \and Afrim Bojnik}
\thanks{T.\ Bayraktar is partially supported by T\"{U}B\.{I}TAK \& German DAAD Collaboration Grant ARDEB-2531/121N191 and T\"{U}B\.{I}TAK 2219 Grant}
\thanks{A.\ Bojnik is partially supported by Tosun Terzioğlu Chair Postdoctoral Fellowship}

\begin{abstract}
In this note, we prove a central limit theorem for the mass distribution of random holomorphic sections associated with a sequence of positive line bundles endowed with $\mathscr{C}^3$ Hermitian metrics over a compact K\"{a}hler manifold. In addition, we show that almost every sequence of such random holomorphic sections exhibits quantum ergodicity in the sense of Zelditch.

  \end{abstract}
\maketitle


\section{Introduction}

The study of asymptotic mass distributions has emerged as a fundamental topic in complex geometry and quantum chaos, providing a bridge between geometric structures and probabilistic phenomena. Mass distributions, derived from normalized $L^2$-densities of holomorphic sections of positive line bundles over compact K\"{a}hler manifolds, offer critical insights into equidistribution, quantum ergodicity, and the statistical properties of zeros of holomorphic sections. These measures encapsulate the interaction between the geometry of the underlying manifold and the probabilistic behavior of random holomorphic sections, making them a central object of study in several mathematical and physical contexts.

Let $(X, \omega)$ be a compact K\"{a}hler manifold of dimension $n$ and $(L, h)$ a positive Hermitian holomorphic line bundle over $X$ with curvature form satisfying the prequantum condition $c_1(L, h) = \omega$.  For a positive integer $p$, the $p$-th tensor power $L^p:=L^{\otimes p}$ of $L$ is equipped with the induced  Hermitian metric $h^p$.
 The \emph{mass distribution} of a holomorphic section $s_p \in H^0(X, L^p)$ is defined as the measure
\begin{equation*}
\mathcal{M}_p(s_p): = \frac{1}{p^n} |s_p(x)|^2_{h^p} dV_X,
\end{equation*}
where $|s_p(x)|^2_{h^p}$ denotes the pointwise norm of $s_p$ with respect to $h^p$, and $dV_X = \omega^n / n!$ is the volume form induced by the K\"{a}hler metric.
These measures provide a quantitative description of the spatial distribution of the $L^2$-norm density of holomorphic sections over $X$. We remark that the normalization factor used here differs from the standard choice (cf. \cite{SZ99, Zel18}). However, for our purposes, this normalization is more suitable.

A key concept in this area is \emph{quantum ergodicity}, which examines the equidistribution of eigenfunctions of quantized systems whose classical dynamics are ergodic. In the context of positive line bundles, quantum ergodicity corresponds to the equidistribution of mass measures associated with holomorphic sections. Specifically,  a sequence of holomorphic sections $s_p \in H^0(X, L^p)$ are said to exhibit quantum ergodicity if its mass distributions $\mathcal{M}_p(s_p)$ converge weakly to the volume form $dV_X$ as $p$ grows towards infinity. 

The asymptotics of mass distributions and their connection to zeros of holomorphic sections have been extensively studied in the context of prequantum line bundles. In the seminal work, Shiffman and Zelditch \cite{SZ99} proved that for a sequence of unit-norm holomorphic sections $s_p \in H^0(X, L^p)$, if the mass measures $\mathcal{M}_p(s_p)$ converge weakly to the volume form $dV_X$, the normalized zero currents $\frac{1}{p} [Z_{s_p}]$, corresponding to the zero divisors of $s_p$, converge weakly to the curvature form of the line bundle. Moreover, they showed that this is the case for generic sequences of global holomorphic sections in the probabilistic sense (see \cite[Theorem 1.1]{SZ99}).These results generalize earlier observations by Nonnenmacher and Voros \cite{NV98} for theta bundle over an elliptic curve $\mathbb{C}/\mathbb{Z}^2$. In another direction, Rudnick \cite{Rud05} proved a similar result for modular cusp forms of weight $2p$, corresponding to positive line bundles on non-compact Riemann surfaces. More recently, Zelditch \cite{Zel18} extended the concept of quantum ergodic sequences from positively curved Hermitian metrics to general smooth metrics on ample line bundles, incorporating Bernstein–Markov measures to establish a broader framework for mass distribution asymptotics. For more on zeros of random holomorphic sections and polynomials  we refer to \cite{HAM56, Kac43, LO43, Hann, BBL, SZ99, SZ08, SZ10, BSZ, Ba,BCHM} and references therein.

In parallel, Bayraktar \cite{Bay20} studied asymptotic mass distribution for multivariate random polynomials with sub-Gaussian coefficients. By considering random linear combinations of orthogonal polynomials associated with a smooth weight function of super-logarithmic growth he proved quantum ergodicity in this setting by using Hanson-Wright inequality. More recently, in the framework of Berezin-Toeplitz quantization on complete Hermitian manifolds, a strong law of large numbers was established for mass distribution  of square-integrable Gaussian holomorphic sections in \cite{DLM24}. Unlike quantum ergodicity, their result focuses on averaged behavior rather than the equidistribution of individual sections. All the aforementioned results rely on Bergman kernel asymptotics and potential theory, with the geometric setting primarily involving tensor powers of a single line bundle.

While the equidistribution and average behavior of mass distributions are well-understood for tensor powers, finer statistical properties, such as asymptotic normality, has remained open. This paper aims to bridge this gap by investigating the asymptotic normality of mass distributions and quantum ergodicity for random holomorphic sections associated with a general sequence of holomorphic line bundles $(L_p, h_p)$ over a compact K\"{a}hler manifold, where the curvature forms $c_1(L_p, h_p)$ satisfy the following Diophantine approximation condition
\begin{equation}\label{appr}\frac{1}{A_{p}}c_{1}(L_{p}, h_{p}) = \omega + O(A_{p}^{-a}) \,\,\text{in}\,\,\text{the}\,\,\mathscr{C}^{0}\text{-topology}\,\,\text{as}\,\,p\rightarrow \infty, \end{equation}where $a>0$, $A_{p}>0$ and $\lim_{p\rightarrow \infty}{A_{p}}= \infty$.  The general framework of considering sequences of line bundles $(L_p, h_p)$ with varying curvature, and the analytic tools needed to study their Bergman kernels, was introduced in the seminal paper \cite{CMM}. The special geometric setup involving Diophantine approximation was first formulated and used systematically in \cite{CLMM} where the authors use the $\mathscr{C}^{\infty}$-norm topology induced by the Levi-Civita connection $\nabla^{TX}$ to handle the complete asymptotic expansion of the Bergman kernel function. However, for our purposes, the weaker $\mathscr{C}^{0}$-norm topology suffices.

In this context, we denote by $H^0(X,L_p)$ the space of global holomorphic sections of the line bundle $L_p$. We equip the space of smooth sections $\mathscr{C}^\infty(X, L_p)$ of $L_p$ with the inner product
\begin{equation}\label{innerprod}
    \langle s_1, s_2 \rangle_p := \int_X \langle s_1(x), s_2(x) \rangle_{h_p}\, dV_X(x), \quad \|s\|_p^2 := \langle s, s \rangle_p,
\end{equation}
where $dV_X = \omega^n/n!$ is the volume form induced by the K\"{a}hler metric on $X$. Since $X$ is compact, the Cartan–Serre finiteness theorem (cf. \cite[Chapter~5]{GR04}) implies that $H^0(X,L_p)$ is finite-dimensional, and we denote its dimension by $d_p$. Let  $\{S_j^p\}_{j=1}^{d_p}$ be an orthonormal basis (ONB) of $H^0(X,L_p)$  with respect to the inner product (\ref{innerprod}). A Gaussian random holomorphic section is then defined by
\begin{equation}
s_p = \sum_{j=1}^{d_p} a_j\, S_j^p,
\end{equation}
where the coefficients $a_j$ are i.i.d. standard complex Gaussian random variables. This construction induces a natural probability measure $\mathbb{P}_p$ on $H^0(X,L_p)$; see Section \ref{Section3} for details. In our framework, we define the mass distribution associated with $s_p$ by
\begin{equation}
\mathcal{M}_p(s_p) := \frac{1}{A_p^n}\, |s_p(x)|^2_{h_p}\, dV_X,
\end{equation}
and the corresponding linear statistic is given by integrating a continuous test function against the mass measure, i.e.,
\begin{equation}
\mathcal{M}_p^\phi(s_p) := \frac{1}{A_p^n} \int_X |s_p(x)|^2_{h_p}\, \phi(x)\, dV_X(x),
\end{equation} for any $\phi\in \mathscr{C}^0(X,\mathbb{R})$ where $n=\dim_{\Bbb{C}}X$.
Our main result is the following theorem, which establishes asymptotic normality for these linear statistics as $p \rightarrow \infty$.

\begin{thm}\label{maint}
Let $\{(L_p, h_p)\}_{p \geq 1}$ be a sequence of positive holomorphic line bundles over a compact K\"{a}hler manifold $(X, \omega)$ of complex dimension $n$, satisfying the Diophantine condition~\emph{(\ref{appr})}, and endowed with Hermitian metrics of class~$\mathscr{C}^3$ such that $\|h_p\|_3^{1/3} / \sqrt{A_p} \to 0$ as $p \to \infty$. For each $p$, let $s_p \in H^0(X, L_p)$ be a Gaussian random holomorphic section, and let $\phi \in \mathscr{C}^0(X, \mathbb{R})$. Then the normalized linear statistics
\begin{equation}
    \frac{\mathcal{M}_p^\phi(s_p) - \mathbb{E}[\mathcal{M}_p^\phi(s_p)]}{\sqrt{\mathrm{Var}[\mathcal{M}_p^\phi(s_p)]}}
\end{equation}
converge in distribution to the standard real Gaussian law~$\mathcal{N}_{\mathbb{R}}(0,1)$ as $p \to \infty$.
\end{thm}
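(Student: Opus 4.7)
The plan is to realize $\mathcal{M}_p^\phi(s_p)$ as a diagonal Hermitian quadratic form in i.i.d.\ standard complex Gaussians, compute its variance via the off-diagonal $L^2$-norm of the Bergman kernel, and then conclude by Lyapunov's classical CLT for sums of independent centered exponentials. Observe that
\begin{equation*}
\mathcal{M}_p^\phi(s_p) \;=\; \frac{1}{A_p^n}\int_X |s_p|^2_{h_p}\,\phi\,dV_X \;=\; \frac{1}{A_p^n}\langle T_{\phi,p}\, s_p,\, s_p\rangle_p,
\end{equation*}
where $T_{\phi,p}: H^0(X,L_p)\to H^0(X,L_p)$, $T_{\phi,p}(s):=\Pi_p(\phi s)$, is the Toeplitz operator with symbol $\phi$ and $\Pi_p$ denotes the Bergman projection. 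Since $\phi$ is real, $T_{\phi,p}$ is self-adjoint with real eigenvalues $\lambda_1^{(p)},\dots,\lambda_{d_p}^{(p)}$ and corresponding orthonormal eigenbasis $\{E_j^p\}$ of $H^0(X,L_p)$. By the unitary invariance of the standard complex Gaussian law on $\mathbb{C}^{d_p}$, we may expand $s_p=\sum_j b_j E_j^p$ with i.i.d.\ standard complex Gaussian coefficients $(b_j)$, giving
\begin{equation*}
\mathcal{M}_p^\phi(s_p) - \E[\mathcal{M}_p^\phi(s_p)] \;=\; \frac{1}{A_p^n}\sum_{j=1}^{d_p} \lambda_j^{(p)}\bigl(|b_j|^2 - 1\bigr).
\end{equation*}
This reduces the theorem to a CLT for weighted sums of i.i.d.\ centered random variables with finite moments of all orders.

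The second step is to identify the leading asymptotics of the variance. Since $\E[(|b_j|^2-1)^2]=1$ and the $b_j$ are independent,
\begin{equation*}
\mathrm{Var}\bigl(\mathcal{M}_p^\phi(s_p)\bigr) \;=\; \frac{1}{A_p^{2n}}\sum_j(\lambda_j^{(p)})^2 \;=\; \frac{1}{A_p^{2n}}\,\mathrm{tr}(T_{\phi,p}^{\,2}),
\end{equation*}
and unfolding the Hilbert--Schmidt norm through the Bergman kernel $B_p(x,y)=\sum_j S_j^p(x)\otimes S_j^p(y)^*$ yields
\begin{equation*}
\mathrm{tr}(T_{\phi,p}^{\,2}) \;=\; \iint_{X\times X}|B_p(x,y)|_{h_p\boxtimes h_p}^{2}\,\phi(x)\phi(y)\,dV_X(x)\,dV_X(y).
\end{equation*}
Under the Diophantine condition (\ref{appr}) together with $\|h_p\|_3^{1/3}/\sqrt{A_p}\to 0$, the near-diagonal localization and off-diagonal decay of $B_p$ in the framework of \cite{CMM,CLMM} give $B_p(x,x)=A_p^n+o(A_p^n)$ together with $|B_p(x,y)|^2$ concentrating in an $A_p^{-1/2}$-tube around the diagonal; invoking the reproducing identity $\int_X |B_p(x,y)|^2 dV_X(y)=B_p(x,x)$ and a standard rescaling argument produces
\begin{equation*}
\mathrm{Var}\bigl(\mathcal{M}_p^\phi(s_p)\bigr) \;=\; \frac{1}{A_p^n}\int_X\phi(x)^2\,dV_X(x) + o(A_p^{-n}),
\end{equation*}
which is strictly positive for $p$ large whenever $\phi\not\equiv 0$.

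The third step is to verify Lyapunov's condition. As a Toeplitz operator with bounded symbol, $\|T_{\phi,p}\|_{\mathrm{op}}\leq \|\phi\|_\infty$, hence $\max_j |\lambda_j^{(p)}|\leq \|\phi\|_\infty$. Combining with the variance asymptotic,
\begin{equation*}
\frac{\sum_j |\lambda_j^{(p)}|^{3}\,\E\bigl[\bigl||b_j|^2-1\bigr|^{3}\bigr]}{\bigl(\sum_j (\lambda_j^{(p)})^{2}\bigr)^{3/2}} \;\leq\; \frac{C\,\|\phi\|_\infty}{\sqrt{\sum_j(\lambda_j^{(p)})^2}} \;=\; O\bigl(A_p^{-n/2}\bigr)\;\longrightarrow\;0,
\end{equation*}
and Lyapunov's theorem for independent sums delivers convergence in distribution of the normalized statistic to $\mathcal{N}_{\mathbb{R}}(0,1)$.

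The main obstacle is securing the above Bergman kernel asymptotics in the low-regularity regime of the paper: the Hermitian metrics are only $\mathscr{C}^3$ and the curvatures satisfy only $\mathscr{C}^0$-Diophantine control, so the full smooth asymptotic expansion of $B_p$ is unavailable and one must instead lean on the quantitative near-diagonal rescaling and off-diagonal Gaussian decay available in this setting. The scaling hypothesis $\|h_p\|_3^{1/3}/\sqrt{A_p}\to 0$ is calibrated precisely so that the rescaled kernel converges to the Bargmann--Fock model and the residual errors are absorbed into the $o(A_p^n)$ remainder in $\mathrm{tr}(T_{\phi,p}^{\,2})$. Once these kernel estimates are in place, the remainder of the argument is a routine diagonalization followed by Lyapunov's CLT.
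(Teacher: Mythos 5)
Your proof is correct, but it takes a genuinely different route from the paper's. The paper first splits $\mathcal{M}_p^\phi=\mathcal{F}_p^\phi+\mathcal{R}_p^\phi$, where $\mathcal{F}_p^\phi(s_p)=\int_X |s_p|_{h_p}^2 B_p(x)^{-1}\phi\,dV$ is recognized as a Sodin--Tsirelson nonlinear functional $\int f(|G_p|)\phi\,d\mu$ with $f(r)=r^2$ of a normalized Gaussian process whose covariance is the normalized Bergman kernel $N_p$; it then verifies the two hypotheses of Theorem~\ref{sots} (proved there by the method of moments and the diagram technique) using the off-diagonal decay and the near-diagonal Bargmann--Fock scaling of $N_p$, and finally disposes of the remainder $\mathcal{R}_p^\phi$ by a variance estimate and a distribution-function argument. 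You instead exploit the fact that $f(r)=r^2$ makes $\mathcal{M}_p^\phi$ a Hermitian quadratic form: you diagonalize the Toeplitz operator $T_{\phi,p}$ (self-adjoint because $\phi$ is real, with $\|T_{\phi,p}\|_{\mathrm{op}}\le\|\phi\|_\infty$), use unitary invariance of the Gaussian measure to reduce to $A_p^{-n}\sum_j\lambda_j^{(p)}(|b_j|^2-1)$, and apply Lyapunov's CLT for triangular arrays of independent centered variables. This is more elementary --- no diagram technique, no $\mathcal{F}_p/\mathcal{R}_p$ splitting, no separate Lemma~\ref{maincontclt} --- and your variance computation via the reproducing identity $\int_X|K_p(x,y)|^2\,dV(y)=B_p(x)$, combined with off-diagonal decay and uniform continuity of $\phi$, is a streamlined version of the paper's Theorem~\ref{varmass} that needs only Theorems~\ref{bkdiag} and~\ref{bkoff}, not the near-diagonal scaling of Theorem~\ref{normbk}. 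What the paper's machinery buys is flexibility (Sodin--Tsirelson handles general nonlinear $f$, while your diagonalization is special to the quadratic functional); what yours buys is a self-contained, classical-CLT proof. Two points you should make explicit when writing this up: the argument, like the theorem itself, implicitly requires $\int_X\phi^2\,dV>0$ so that the variance is eventually positive, and the passage from $\max_j|\lambda_j^{(p)}|\le\|\phi\|_\infty$ to the Lyapunov ratio uses $\sum_j|\lambda_j^{(p)}|^3\le\|\phi\|_\infty\sum_j(\lambda_j^{(p)})^2$ together with $\sum_j(\lambda_j^{(p)})^2=\mathrm{tr}(T_{\phi,p}^2)\sim A_p^n\int_X\phi^2\,dV$.
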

We remark that this theorem can be viewed as a form of \emph{geometric universality}, as the limiting Gaussian distribution is independent of the specific geometric constraints of the line bundles. Moreover, it holds in the more general context of sequences of line bundles satisfying the Diophantine approximation condition (\ref{appr}). To the best of our knowledge, this aspect has not been previously studied in the context of mass distributions.

In contrast, the asymptotic normality for smooth linear statistics of the zero divisors of random holomorphic sections has been extensively studied in various settings (cf. \cite{ST, SZ10, Bay16, BG1, BG2, DLM24}). A key ingredient in these studies is the normalized Bergman kernel, which serves as the covariance function for the associated Gaussian holomorphic fields on $X$. This connection allows the problem to be reduced to the foundational result of Sodin and Tsirelson \cite[Theorem 2.2]{ST} on nonlinear functionals of Gaussian processes.
 Our analysis of mass distributions builds on related ideas, relying on refined Bergman kernel asymptotics tailored to the Diophantine approximation condition, variance estimates, and a detailed examination of the associated distribution functions.

Along the way, we also prove an equidistribution result for the mass distributions of random holomorphic sections associated with the sequence of line bundles, thereby extending the earlier results of \cite{SZ99, Bay16} on mass equidistribution. Although our primary focus is on mass distributions, we note that the zero distributions for general sequences of line bundles have been extensively studied (cf. \cite{Ba,BCM, BG, CMM, CLMM}).

The structure of the paper is as follows: Section 2 reviews the necessary background on K\"{a}hler geometry, holomorphic line bundles, and Bergman kernel asymptotics. Section 3 introduces the definitions and fundamental properties of mass distributions, including the proofs of almost sure equidistribution of masses. Section 4 presents the main result, focusing on the proof of asymptotic normality for mass measures.

\section{Preliminaries} \label{Sec2}

Let $(X, J, \omega)$ be a compact K\"{a}hler manifold of complex dimension $n$, where $\omega$ is the K\"{a}hler form and $J$ is the complex structure compatible with $\omega$. Let $\{(L_p, h_p)\}_{p \geq 1}$ be a sequence of holomorphic line bundles on $X$, each equipped with a Hermitian metric $h_p$ of class $\mathscr{C}^3$. For each line bundle, the metric $h_p$ induces a Chern connection $\nabla_{L_p}$, and the associated curvature $R_{L_p} = \nabla_{L_p}^2$ defines the Chern curvature form of $L_p$,
\begin{equation*}
c_1(L_p, h_p) = \frac{i}{2\pi} R_{L_p},
\end{equation*}
which represents the first Chern class of $L_p$ in the cohomology group $H^2(X, \mathbb{Z})$. Locally, the curvature can be written as
\begin{equation*}
c_1(L_p, h_p) = dd^c \varphi_p,
\end{equation*}
where $dd^c = \frac{i}{\pi} \partial \overline{\partial}$ is the complex differential operator, and $\varphi_p$ is the local weight function associated with the Hermitian metric $h_p$. Specifically, $\varphi_p$ is a real-valued $\mathscr{C}^3$ function, defined locally in terms of a holomorphic frame $e_p$ of $L_p$ by the relation:
\begin{equation*}
h_p(e_p, e_p) = |e_p|_{h_p}^2 = e^{-2\varphi_p}.
\end{equation*}

To facilitate local analysis across the sequence of line bundles, we work on contractible Stein open subsets of $X$, such as coordinate balls or polydiscs. If $\mathcal{U} \subset X$ is a contractible Stein open subset  then by Cartan’s theorem B (see e.g., \cite{HOR90}, p.201), $H^1(\mathcal{U}, \mathcal{O}^*) \cong H^2(\mathcal{U}, \mathbb{Z})$, where $H^1(\mathcal{U}, \mathcal{O}^*)$ is the first cohomology group with coefficients in the sheaf of invertible holomorphic functions. Since $\mathcal{U}$ is contractible, we have $H^2(\mathcal{U}, \mathbb{Z}) = 0$, hence $H^1(\mathcal{U}, \mathcal{O}^*) = 0$. By a classical result of Oka \cite{Oka39}, this implies that any holomorphic line bundle over $\mathcal{U}$ is trivial. Consequently, holomorphic frames exist for every line bundle on such subsets, allowing for explicit computation of weights and curvature forms.

A line bundle $(L_p, h_p)$ is called \emph{positive} or \emph{semi-positive} if its curvature form satisfies $c_1(L_p, h_p)(v, \overline{v}) > 0$ or $c_1(L_p, h_p)(v, \overline{v}) \geq 0$, respectively, for all nonzero tangent vectors $v \in T^{1,0}_x X$, $x \in X$. Equivalently, this means that the weight function $\varphi_p$ is strictly plurisubharmonic or plurisubharmonic, respectively, i.e., $dd^c \varphi_p > 0$ or $dd^c \varphi_p \geq 0$ pointwise.

In our setting, the sequence of curvature forms satisfies the Diophantine condition \eqref{appr}, which implies the positivity of the line bundles for large $p$. Consequently, by Grauert’s result (cf. Proposition 2.4 in \cite{CMM}), these line bundles are ample, and $X$ is a projective manifold.

To measure the distance between any two points $x, y \in X$, we utilize the Riemannian distance induced by the K\"{a}hler structure. Specifically, the K\"{a}hler form $\omega$ and the complex structure $J$ on $X$ define a Riemannian metric $g$ on $X$ via
\begin{equation*}
g^{TX} (u, v):= \omega(u, Jv) \quad \text{for all } u, v \in TX.
\end{equation*}
Given a piecewise smooth curve $\gamma: [a, b] \to X$ such that $\gamma(a) = x$ and $\gamma(b) = y$, the length $\mathscr{L}(\gamma)$ of the curve $\gamma$ is given by
\begin{equation*}
\mathscr{L}(\gamma) = \int_{a}^{b} \sqrt{g_{\gamma(t)}(\dot{\gamma}(t), \dot{\gamma}(t))} \, dt,
\end{equation*}
and the Riemannian distance $d(x, y)$ between $x$ and $y$ is defined as
\begin{equation*}
d(x, y) = \inf\left\{ \mathscr{L}(\gamma) : \gamma(a) = x, \, \gamma(b) = y \right\}.
\end{equation*}

The Riemannian volume form on $(X, g^{TX})$ will be denoted by $dV_X=\omega^n / n!,$ and when it is clear from the context we drop the subscript indicating $X$.

\subsection{Reference Covers and Special Metric Structures} \label{metricss}

Let $(U, z)$, with $z = (z_{1}, \ldots, z_{n})$, denote local coordinates centered at a point $x \in X$. The closed polydisk around a point $y \in U$ with equilateral radius $(r, \ldots, r)$ (with $r > 0$) is defined by
\begin{equation*}
P^{n}(y, r) := \{ z \in U : |z_{j} - y_{j}| \leq r,\quad j = 1, 2, \ldots, n \}.
\end{equation*}
We say that the coordinates \((U,z)\) are \emph{Kähler at \(y\)} if the Kähler form has the expansion
\begin{equation}\label{kahcor}
    \omega_z
\;=\;
\frac{i}{2}\sum_{j=1}^{n}dz_{j}\wedge d\bar z_{j}
\;+\; O\bigl(\|z-y\|^{2}\bigr)
\quad\text{on }U.
\end{equation}

\begin{defn} \label{refed}
A \emph{reference cover} of $X$ is defined as follows. For $j = 1, 2, \ldots, N$, consider a set of points $x_{j} \in X$ and:
\begin{itemize}
  \item[(a)] Stein, open, simply connected coordinate neighborhoods $(U_{j}, w^{(j)})$ centered at $x_{j} \equiv 0$.
  \item[(b)] Radii $R_{j} > 0$ such that $P^{n}(x_{j}, 2R_{j}) \Subset U_{j}$ and for every $y \in P^{n}(x_{j}, 2R_{j})$ there exist coordinates on $U_{j}$ that are K\"{a}hler at $y$.
  \item[(c)] The covering $X = \bigcup_{j=1}^{N} P^{n}(x_{j}, R_{j})$.
\end{itemize}
Once a reference cover is chosen, we set $R = \min\{ R_{j} : 1 \leq j \leq N \}.$
\end{defn}

The construction of a reference cover is straightforward. For any $x \in X$, choose a Stein, open, simply connected neighborhood $U$ of $0 \in \mathbb{C}^{n}$ (for example, a round ball in $\mathbb{C}^{n}$) so that, under a suitable chart, $x \equiv 0$. Select $R > 0$ such that $P^{n}(x, R) \Subset U$ and for every $y \in P^{n}(x, R)$ there exist coordinates on $U$ that are K\"{a}hler at $y$. By the compactness of $X$, one may then choose a finite collection $\{ x_{j} \}_{j=1}^{N}$ satisfying these conditions.

Next, consider the differential operators $D_{w}^{\alpha}$ (with $\alpha \in \mathbb{N}^{2n}$) defined on $U_{j}$ with respect to the real coordinates corresponding to $w = w^{j}$. For $\varphi \in \mathscr{C}^{k}(U_{j})$, define
\begin{equation*}
    \|\varphi\|_{k} = \|\varphi\|_{k,w} = \sup \{ |D_{w}^{\alpha} \varphi(w)| : w \in P^{n}(x_{j}, 2R_{j}),\; |\alpha| \leq k \}.
\end{equation*}
Let $(L, h)$ be a Hermitian holomorphic line bundle on $X$ with smooth metric $h$. For $k \leq l$, define
\begin{equation*}
\|h\|_{k, U_{j}} := \inf \{ \|\varphi_{j}\|_{k} : \varphi_{j} \in \mathscr{C}^{l}(U_{j}) \text{ is a weight of } h \text{ on } U_{j} \},
\end{equation*}
and set    
\begin{equation*}
\|h\|_{k} = \max \{ 1, \; \|h\|_{k, U_{j}} : 1 \leq j \leq N \}.
\end{equation*}
Here, $\varphi_{j}$ is called a weight of $h$ on $U_{j}$ if there exists a holomorphic frame $e_{j}$ of $L$ on $U_{j}$ such that
\begin{equation*}
|e_{j}|_{h} = e^{-\varphi_{j}}.
\end{equation*}

We now state the following key lemma which was originally stated
and proved in \cite{CMM}. While a version of the result was reproduced in \cite{BG1} adapted from \cite{CMM}.

\begin{lem}[Reference Cover Lemma] \label{refe}
Let a reference cover of $X$ be given. Then there exists a constant $C > 0$, depending on the reference cover, such that the following holds. For any Hermitian line bundle $(L, h)$ on $X$, any $j \in \{1, \ldots, N\}$, and any $x \in P^{n}(x_{j}, R_{j})$, there exist coordinates $z = (z_{1}, \ldots, z_{n})$ on $P^{n}(x, R)$, centered at $x \equiv 0$, which are K\"{a}hler coordinates for $x$ and for which the Hermitian line bundle $(L, h)$ admits a weight $\varphi$ on $P^{n}(x, R)$ of the form
\begin{equation*}
\varphi(z) = \mathrm{Re}(\psi(z)) + \sum_{j=1}^{n} \lambda_{j} |z_{j}|^{2} + \widetilde{\varphi}(z),
\end{equation*}
where $\psi$ is a holomorphic polynomial of degree at most $2$, $\lambda_{j} \in \mathbb{R}$, and
\begin{equation*}
|\widetilde{\varphi}(z)| \leq C\,\|h\|_{3}\,\|z\|^{3} \quad \text{for } z \in P^{n}(x, R).
\end{equation*}
\end{lem}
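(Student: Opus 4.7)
The plan is to construct the desired coordinates and weight in three stages: select a Kähler chart at $x$ furnished by property (b) of the reference cover, Taylor expand any local weight of $h$ to second order, and perform a unitary linear change of variables to diagonalize the mixed Hermitian Hessian.

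First, fix $x \in P^{n}(x_{j}, R_{j})$. By property (b) of Definition~\ref{refed}, I choose coordinates on $U_{j}$ that are Kähler at $x$ and translate so that $x \equiv 0$; since $R \leq R_{j}$, the polydisk $P^{n}(x, R)$ lies inside $P^{n}(x_{j}, 2R_{j}) \subset U_{j}$. Because $U_{j}$ is Stein and simply connected, the Oka principle (invoked earlier in the paper) implies that $L|_{U_{j}}$ is holomorphically trivial, so there is a holomorphic frame $e$ of $L$ on $U_{j}$ and a $\mathscr{C}^{3}$ local weight $\varphi_{0}$ with $|e|_{h} = e^{-\varphi_{0}}$. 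From $\|h\|_{3, U_{j}} \leq \|h\|_{3}$ I may choose $\varphi_{0}$ so that $\|\varphi_{0}\|_{3}$ (in the original $w^{(j)}$-coordinates) is comparable to $\|h\|_{3}$; the passage from $w^{(j)}$-derivatives to $z$-derivatives costs only a reference-cover constant.

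Next, I Taylor expand $\varphi_{0}$ at the origin in the new coordinates $z$. Exploiting that $\varphi_{0}$ is real and grouping terms by $(z,\bar z)$-bidegree,
\[
\varphi_{0}(z) \;=\; \mathrm{Re}\bigl(\psi(z)\bigr) \;+\; \sum_{i,k=1}^{n} H_{ik}\, z_{i}\bar z_{k} \;+\; R(z),
\]
where $\psi(z)$ is the holomorphic polynomial of degree $\leq 2$ built from $\varphi_{0}(0)$, $\partial_{i}\varphi_{0}(0)$, and $\partial_{i}\partial_{k}\varphi_{0}(0)$ (the real-valuedness of $\varphi_{0}$ pairs conjugate coefficients into real parts of holomorphic monomials), the matrix $H_{ik} = \partial_{i}\partial_{\bar k}\varphi_{0}(0)$ is Hermitian, and $R(z)$ is the cubic remainder. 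I diagonalize $H$ by a unitary substitution $z \mapsto Uz$; any unitary $U$ preserves $\tfrac{i}{2}\sum dz_{i}\wedge d\bar z_{i}$ pointwise, so the new coordinates still satisfy~(\ref{kahcor}) at $0$, the mixed term becomes $\sum_{k=1}^{n} \lambda_{k}\,|z_{k}|^{2}$ with the real eigenvalues $\lambda_{k}$ of $H$, and $\mathrm{Re}(\psi(z))$ pulls back to the real part of another holomorphic polynomial of degree $\leq 2$.

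Finally, the integral form of Taylor's theorem gives $|R(z)| \leq C_{0}\,\|\varphi_{0}\|_{3}\,\|z\|^{3}$ on $P^{n}(x, R)$ for a universal multinomial factor $C_{0}$; combining this with the reference-cover constant from the previous paragraph yields the stated bound $|\widetilde{\varphi}(z)| \leq C\,\|h\|_{3}\,\|z\|^{3}$. The main obstacle, and the reason the lemma is formulated relative to a fixed reference cover, is the \emph{uniformity} of $C$ in $x \in P^{n}(x_{j}, R_{j})$ and in the Hermitian line bundle $(L,h)$: the Kähler-at-$x$ coordinates from (b) and the diagonalizing unitary $U$ depend on $x$ (and $U$ on $h$), so one must invoke compactness of $\overline{P^{n}(x_{j}, R_{j})}$ and continuity of the coordinate data to bound the operator norms of these successive linear changes uniformly. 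Independence of $C$ from $(L,h)$ is then automatic, since the line bundle enters only through $\varphi_{0}$ and its $\mathscr{C}^{3}$-norm is factored out on the right-hand side.
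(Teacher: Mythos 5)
The paper does not prove this lemma itself but quotes it from \cite{CMM} (cf.\ also \cite{BG1}), and your argument is essentially the standard proof given there: choose the K\"ahler-at-$x$ chart from the reference cover, Taylor-expand a local weight to second order, absorb the pure $(2,0)$ and $(0,2)$ parts into $\mathrm{Re}(\psi)$, diagonalize the Hermitian mixed Hessian by a unitary (which preserves both the K\"ahler normalization at $0$ and the Euclidean norm $\|z\|$ in the remainder bound), and control the cubic remainder by the $\mathscr{C}^3$-norm with constants uniform over the fixed cover. This is correct and takes the same route as the cited source.
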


Note that at the point $x \equiv 0$ (where K\"{a}hler coordinates are available as in \eqref{kahcor}), the K\"{a}hler form simplifies to
\begin{equation*}
\omega_{x} = i \sum_{j=1}^{n} \frac{1}{2} dz_{j} \wedge d\overline{z_{j}}.
\end{equation*}
Moreover, using the local representation of $c_1(L_p, h_p)$ together with Lemma~\ref{refe}, we have
\begin{equation*}
c_1(L_p, h_p)_x = dd^c \varphi_p(0) = i \sum_{j=1}^{n} \frac{\lambda_j^p}{\pi} dz_{j} \wedge d\overline{z_{j}}.
\end{equation*}
The Diophantine approximation \eqref{appr} then implies
\begin{equation}
\lim_{p \to \infty} \frac{\lambda_j^p}{\pi A_p} = \frac{1}{2}, \quad \text{for } j = 1, 2, \ldots, n,
\end{equation}
which in turn yields
\begin{equation} \label{lamdas}
\lim_{p \to \infty} \frac{\lambda_1^p \cdots \lambda_n^p}{A_p^n} = \left( \frac{\pi}{2} \right)^n.
\end{equation}

\subsection{Bergman Kernel Asymptotics} \label{Bergman}

Let $L^{2}(X, L_{p})$ denote the completion of $\mathscr{C}^{\infty}(X, L_{p})$ with respect to the norm defined in (\ref{innerprod}). Denote by
\begin{equation*}
\Pi_{p}: L^{2}(X, L_{p}) \longrightarrow H^{0}(X, L_{p})
\end{equation*}
the orthogonal projection onto the space of global holomorphic sections. The Bergman kernel $K_{p}(x, y)$ is defined as the integral kernel of this projection.
If $\{S^{p}_{j}\}_{j=1}^{d_{p}}$ is an orthonormal basis for $H^{0}(X, L_{p})$, then we can express the Bergman kernel in terms of this basis as follows
\begin{equation}\label{offds}
K_{p}(x, y) = \sum_{j=1}^{d_{p}} S^{p}_{j}(x) \otimes S^{p}_{j}(y)^{*} \in L_{p, x} \otimes L_{p, y}^{*},
\end{equation}
where $S^{p}_{j}(y)^{*} = \langle \, \cdot \,, S^{p}_{j}(y) \rangle_{h_{p}} \in L^{*}_{p, y}.$
The restriction of the Bergman kernel to the diagonal is called the Bergman kernel function of $H^{0}(X, L_{p})$ and is denoted by
\begin{equation}\label{diags}
B_{p}(x) := K_{p}(x, x) = \sum_{j=1}^{d_{p}} |S^{p}_{j}(x)|_{h_{p}}^{2}.
\end{equation}
This function  enjoys the dimensional density property
\begin{equation*}
\dim H^{0}(X, L_{p}) = \int_{X} B_{p}(x) \, dV(x) = A_{p}^{n}\,\mathrm{Vol}(X) + o(A_{p}^{n}).
\end{equation*}
Moreover, it satisfies the following variational principle
\begin{equation}\label{variat}
B_{p}(x) = \max \Big\{ |S(x)|^{2}_{h_{p}} : S \in H^{0}(X, L_{p}), \ \|S\|_{p} = 1 \Big\},
\end{equation}
which holds for every $x \in X$ at which the local weight $\varphi_{p}(x) > -\infty$ (with $\varphi_{p}$ defined as above for the metric $h_{p}$).

We also define the \emph{normalized Bergman kernel}
\begin{equation}
N_{p}(x, y) := \frac{|K_{p}(x, y)|_{h_{p, x} \otimes h_{p, y}^{*}}}{\sqrt{B_{p}(x)}\,\sqrt{B_{p}(y)}}, \quad x,y \in X,
\end{equation}
which will play a central role in the analysis throughout this work.

The asymptotic results on the Bergman kernel will be needed in the sequel. We begin by stating the diagonal asymptotics of the Bergman kernel function. The following result is an immediate corollary of \cite[Theorem 1.3]{CMM} which holds under considerably more general assumptions.

\begin{thm}\label{bkdiag}
Let $(X, \omega)$ be a compact K\"{a}hler manifold of complex dimension $n$, and let $\{(L_p, h_p)\}_{p \geq 1}$ be a sequence of holomorphic line bundles endowed with Hermitian metrics $h_p$ of class~$\mathscr{C}^3$, satisfying the Diophantine approximation condition. Suppose that $\|h_p\|_3^{1/3} / \sqrt{A_p} \to 0$ as $p \to \infty$. Then the Bergman kernel functions satisfy
\[
B_p(x) = A_p^n(1 + o(1)) \quad \text{uniformly on } X \text{ as } p \to \infty.
\]
\end{thm}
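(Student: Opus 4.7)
The plan is to deduce the statement from the general Bergman kernel asymptotics of~\cite[Theorem~1.3]{CMM} by verifying that our hypotheses place us inside that framework and by identifying the leading coefficient with $A_p^n$. Two things then need to be made explicit: (i) a localization around an arbitrary point that reduces the problem to a quadratic model weight on $\mathbb{C}^n$, and (ii) an explicit computation of the corresponding model Bergman kernel at the origin.

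For (i), I would fix $x \in X$ and apply the Reference Cover Lemma (Lemma~\ref{refe}). This produces Kähler coordinates centered at $x \equiv 0$ and a local weight of $h_p$ of the form
$$\varphi_p(z) = \operatorname{Re}\psi_p(z) + \sum_{j=1}^n \lambda_j^p |z_j|^2 + \widetilde\varphi_p(z), \qquad |\widetilde\varphi_p(z)| \leq C\,\|h_p\|_3 \|z\|^3.$$
Absorbing the harmonic polynomial $\operatorname{Re}\psi_p$ into a redefinition of the holomorphic frame, the relevant local model is the quadratic weight $\varphi_p^{\mathrm{mod}}(z) := \sum_j \lambda_j^p |z_j|^2$ on $\mathbb{C}^n$. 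On the natural Bergman scale $\|z\| \lesssim 1/\sqrt{A_p}$ the cubic error satisfies $|\widetilde\varphi_p(z)| \lesssim \|h_p\|_3 / A_p^{3/2} = o(1)$, which is exactly the content of the hypothesis $\|h_p\|_3^{1/3}/\sqrt{A_p} \to 0$. Uniformity in $x \in X$ is then inherited from the finite reference cover fixed in Definition~\ref{refed}.

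For (ii), the Bergman kernel of the model Bargmann--Fock space is constant and factors across the coordinate directions. A direct computation in one variable shows that an orthonormal basis of the space of entire functions square-integrable against $e^{-2\lambda|z|^2}\,dA(z)$ is given by $e_k(z) = \bigl((2\lambda)^{k+1}/(\pi k!)\bigr)^{1/2} z^k$, whose diagonal sum with metric equals $2\lambda/\pi$. Therefore
$$B_p^{\mathrm{mod}}(0) = \prod_{j=1}^n \frac{2\lambda_j^p}{\pi} = \frac{2^n\, \lambda_1^p \cdots \lambda_n^p}{\pi^n},$$
and (\ref{lamdas}) gives $B_p^{\mathrm{mod}}(0)/A_p^n \to 1$, pinning down the desired leading term.

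The main obstacle, and the actual content of~\cite{CMM}, is to justify the replacement of the genuine kernel $B_p(x)$ by the model kernel $B_p^{\mathrm{mod}}(0)$ with a uniform error of order $o(A_p^n)$. The upper bound comes from the extremal characterization~(\ref{variat}) applied to truncations of model peak sections centered at $x$, while the lower bound comes from a Hörmander-type $L^2$ $\bar\partial$-solution that corrects such truncated peak sections into genuine global holomorphic sections of $L_p$. The Diophantine condition~(\ref{appr}) supplies uniform strict plurisubharmonicity of the $\varphi_p$ on the Bergman scale, which is precisely what is needed for the $\bar\partial$-estimate to hold with constants independent of $p$, while the cubic bound on $\widetilde\varphi_p$ keeps the cut-off and correction errors subleading. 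The desired uniform asymptotic $B_p(x) = A_p^n(1 + o(1))$ then follows.
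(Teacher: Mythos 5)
Your proposal is correct and takes essentially the same route as the paper, which offers no independent proof and simply invokes \cite[Theorem~1.3]{CMM}; your reconstruction of the underlying peak-section/H\"ormander argument and of the role of the hypothesis $\|h_p\|_3^{1/3}/\sqrt{A_p}\to 0$ in controlling the cubic remainder of Lemma~\ref{refe} matches the intended source. Your identification of the leading constant is also consistent with the paper's conventions: with $|e_{L_p}|_{h_p}^2=e^{-2\varphi_p}$ and $dd^c=\tfrac{i}{\pi}\partial\bar\partial$, the one-variable model kernel at the origin is indeed $2\lambda/\pi$, so $B_p^{\mathrm{mod}}(0)=2^n\lambda_1^p\cdots\lambda_n^p/\pi^n$, and \eqref{lamdas} gives $B_p^{\mathrm{mod}}(0)/A_p^n\to 1$, exactly as the paper's normalization requires.
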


The next result  shows an exponential off-diagonal decay of the Bergman kernels $K_p (x,y)$. This result was obtained in \cite[Theorem 1.4]{BCM} in a more general geometric setting:

\begin{thm}\label{bkoff}
Under the same assumptions as in Theorem~\ref{bkdiag}, there exist constants \( C_1, C_2 > 0 \) and an integer \( p_0 \ge 1 \) such that for all \( x, y \in X \) and \( p > p_0 \), the Bergman kernels satisfy
\[
|K_p(x, y)|_{h_p}^2 \le C_1\, A_p^{2n} \exp\big(-C_2 \sqrt{A_p}\, d(x, y)\big).
\]
\end{thm}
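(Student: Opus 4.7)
The plan is to combine the diagonal asymptotics of Theorem~\ref{bkdiag} with an Agmon-type weighted $L^{2}$ estimate for holomorphic sections, following the Ma--Marinescu scheme adapted to the Diophantine setting as in \cite{BCM}. For points $x,y$ with $d(x,y) \le r_{0}$ (for some small fixed threshold $r_{0} > 0$), the Cauchy--Schwarz inequality applied to the basis representation~\eqref{offds} yields
\[
|K_{p}(x,y)|_{h_{p}}^{2} \;\le\; B_{p}(x)\,B_{p}(y) \;\le\; C\,A_{p}^{2n}
\]
by Theorem~\ref{bkdiag}, and this is absorbed into the target bound once $C_{1}$ is chosen large enough. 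It therefore remains to treat the regime $d(x,y) \ge r_{0}$, where genuine decay is needed.

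For fixed $y_{0} \in X$ I would study the holomorphic section $s_{y_{0}}(\cdot) := K_{p}(\cdot, y_{0}) \in H^{0}(X,L_{p}) \otimes L_{p,y_{0}}^{*}$, whose $L^{2}$-norm satisfies $\|s_{y_{0}}\|_{p}^{2} = B_{p}(y_{0}) = O(A_{p}^{n})$ by the reproducing property together with Theorem~\ref{bkdiag}. The central step is an Agmon-type estimate of the form
\[
\int_{X} |s_{y_{0}}(x)|_{h_{p}}^{2}\, e^{\,2\alpha\sqrt{A_{p}}\,d(x,y_{0})}\, dV_{X}(x) \;\le\; C\, A_{p}^{n}
\]
for some $\alpha > 0$ uniform in $p$ and $y_{0}$. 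Such a bound would be derived from the Bochner--Kodaira--Nakano identity: the Diophantine condition \eqref{appr} forces $c_{1}(L_{p},h_{p}) \ge \tfrac{1}{2}A_{p}\,\omega$ for large $p$, yielding a uniform spectral gap of order $A_{p}$ for the Kodaira Laplacian $\square_{p}$ acting on $L_{p}$-valued $(0,1)$-forms. Testing this identity against $s_{y_{0}}$ multiplied by the Lipschitz weight $e^{\alpha\sqrt{A_{p}}\,d(\cdot,y_{0})}$ and controlling the commutator $[\bar\partial,\alpha\sqrt{A_{p}}\,d(\cdot,y_{0})]$ (of size $\alpha\sqrt{A_{p}}$) by the spectral gap forces the mass of $s_{y_{0}}$ to concentrate near $y_{0}$. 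To convert this integrated statement into the pointwise bound at $x$, I would invoke the sub-mean-value inequality for holomorphic sections at scale $1/\sqrt{A_{p}}$,
\[
|s(x)|_{h_{p}}^{2} \;\le\; C\, A_{p}^{n} \int_{B(x,\,r/\sqrt{A_{p}})} |s|_{h_{p}}^{2}\, dV_{X},
\]
which follows from Lemma~\ref{refe} since on such a ball the local weight of $h_{p}$ is quadratic up to an $O(1)$ error. Applying this to $s_{y_{0}}$ on a ball of radius $r/\sqrt{A_{p}}$ around $x$, on which $e^{2\alpha\sqrt{A_{p}}\,d(\cdot,y_{0})}$ varies by at most $e^{2\alpha r}$, and combining with the weighted estimate gives $|s_{y_{0}}(x)|_{h_{p}}^{2} \le C\,A_{p}^{2n}\, e^{-2\alpha\sqrt{A_{p}}\,d(x,y_{0})}$, which is the claim with $C_{2} = 2\alpha$.

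The main obstacle will be justifying the Agmon step under only $\mathscr{C}^{3}$ metric regularity: the standard semiclassical apparatus of Ma--Marinescu uses smooth metrics and a full heat-kernel expansion, whereas here one must rely on the bare Bochner--Kodaira identity together with the pointwise curvature control supplied by Lemma~\ref{refe}. The exponent rate $\sqrt{A_{p}}$ is dictated by the scale matching: a Lipschitz weight contributes a commutator of size $\alpha\sqrt{A_{p}}$, whose square $\alpha^{2}A_{p}$ must be absorbed into the spectral gap of order $A_{p}$, capping $\alpha$ by a constant independent of $p$; one cannot do better with a Lipschitz weight, and going to smooth weights would require more regularity. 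A secondary technical point is that $d(\cdot,y_{0})$ is only Lipschitz; this is handled by a standard smooth regularization preserving the Lipschitz constant, which leaves the gradient bound intact almost everywhere.
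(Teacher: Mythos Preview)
The paper does not supply its own proof of this statement; it is quoted directly from \cite[Theorem~1.4]{BCM}. Your sketch is in the right spirit and identifies the correct ingredients (spectral gap of order $A_p$, exponential weight at rate $\sqrt{A_p}$, sub-mean-value at scale $A_p^{-1/2}$), but two steps do not work as written.

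First, the near-diagonal reduction with a \emph{fixed} threshold $r_0>0$ is incorrect. For $d(x,y)$ bounded away from zero the target bound $C_1 A_p^{2n}\exp\bigl(-C_2\sqrt{A_p}\,d(x,y)\bigr)$ is $o(A_p^{2n})$ as $p\to\infty$, so the Cauchy--Schwarz estimate $|K_p(x,y)|_{h_p}^{2}\le B_p(x)B_p(y)\le C A_p^{2n}$ cannot be absorbed into it by enlarging $C_1$. The threshold that makes this argument valid scales as $r_0/\sqrt{A_p}$; in fact, once the weighted estimate below is established it covers all $(x,y)$ and the case split is unnecessary.

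Second, the Agmon step as you phrase it---``testing Bochner--Kodaira against $s_{y_0}$ multiplied by $e^{\alpha\sqrt{A_p}\,d(\cdot,y_0)}$''---does not yield the claimed bound $\int_X |s_{y_0}|_{h_p}^{2}\, e^{2\alpha\sqrt{A_p}\,d(\cdot,y_0)}\,dV_X\le C A_p^{n}$. Since $s_{y_0}$ is holomorphic it lies in $\ker\square_p$, and the commutator argument you describe only controls the component of $e^{\psi}s_{y_0}$ orthogonal to $H^{0}(X,L_p)$, not its full norm. What is actually needed is an additional localization step: one cuts off $s_{y_0}$ near $y_0$, solves $\bar\partial u=(\bar\partial\chi)\,s_{y_0}$ using H\"ormander/Kohn estimates with the exponential weight (this is where the spectral gap enters), and then applies the sub-mean-value inequality to the resulting holomorphic correction at the far point $x$. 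The method in \cite{BCM} proceeds instead via the heat kernel and finite propagation speed of the associated wave operator, but either route requires this extra mechanism that your outline omits.
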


\vspace{0.6 em}

In order to study the near-diagonal asymptotics of the Bergman kernel, we introduce the following convention. Let $V \subset X$ and $U \subset \mathbb{C}^n$ be open subsets, with $x_0 \in V$ and $0 \in U$. Consider a K\"{a}hler coordinate chart $\Psi: V \to U$ such that $\Psi(x_0) = 0$, providing a local identification of $V$ with $U$.
For $z, w \in U \subset \mathbb{C}^n$, we express their preimages under $\Psi$ as:
\begin{equation*}
\Psi^{-1}(z) = x_0 + z, \quad \Psi^{-1}(w) = x_0 + w,
\end{equation*}
with the Bergman kernel in these coordinates written as:
\begin{equation*}
K_p(\Psi^{-1}(z), \Psi^{-1}(w)) := K_p(x_0 + z, x_0 + w).
\end{equation*}

Here, $x_0 + z$ corresponds to the point in $X$ obtained by displacing $x_0$ by $z$ in the local coordinates. While differences such as $\Psi^{-1}(z) - x_0$ are not globally meaningful on $X$, the displacement $z - 0$ is well-defined in the vector space $\mathbb{C}^n$, where we treat the displacement as a well-posed vector operation. This \emph{linearization} simplifies the analysis of the Bergman kernel near the diagonal by allowing us to focus on displacements in the local coordinate system. This approach follows standard conventions used in the literature (e.g., \cite{SZ08}, \cite{SZ10}) and is crucial for deriving the asymptotic behavior of the Bergman kernel near the diagonal, as formalized in the theorems (cf. \cite[Theorem 4.3]{BG1}  and \cite[Theorem 2.3]{Bay16}) ]that follow.

\begin{thm}
\label{nearbk}
Assume the hypotheses of Theorem~\ref{bkdiag}. Fix a point \(x\in X\) and choose K\"ahler coordinates centred at \(x\) provided by Lemma~\ref{refe} (Reference-Cover Lemma).  
Then
\begin{equation*}
  \frac{\Bigl|K_p\Bigl(x+\frac{u}{\sqrt{A_p}},\,x+\frac{\overline{v}}{\sqrt{A_p}}\Bigr)\Bigr|^2\,
        \exp\!\Bigl[-2\,\varphi_p\Bigl(x+\frac{u}{\sqrt{A_p}}\Bigr)\Bigr]
        \,\exp\!\Bigl[-2\,\varphi_p\Bigl(x+\frac{\overline{v}}{\sqrt{A_p}}\Bigr)\Bigr]}
       {A_p^{2n}\,\exp\!\Bigl[-2\sum_{j=1}^n \frac{\lambda_j^p}{A_p}\,\lvert u_j-\overline{v}_j\rvert^2\Bigr]}
  \;\longrightarrow\; 1,
\end{equation*}
locally uniformly on $\mathbb{C}_{u}^n \times \mathbb{C}_{v}^n$ as $p\to\infty$.
\end{thm}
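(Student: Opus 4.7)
The plan is to localize around $x$ via the Reference Cover Lemma, perform the semiclassical rescaling $z=u/\sqrt{A_p}$, and compare the rescaled Bergman kernel to the explicit Bargmann--Fock model whose quadratic weight is dictated by the Chern curvature of $(L_p,h_p)$ at $x$.

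Applying Lemma~\ref{refe} at $x$ produces K\"ahler coordinates on $P^n(x,R)$ in which the weight has the form $\varphi_p(z)=\mathrm{Re}(\psi_p(z))+\sum_{j=1}^n\lambda_j^p|z_j|^2+\widetilde\varphi_p(z)$ with $|\widetilde\varphi_p(z)|\le C\|h_p\|_3\|z\|^3$. A holomorphic gauge change $e_p\mapsto e^{\psi_p(z)}e_p$ of the local frame absorbs $\mathrm{Re}(\psi_p)$ and leaves the quantity $|K_p|^2 e^{-2\varphi_p(\cdot)}e^{-2\varphi_p(\cdot)}$ appearing in the theorem invariant, so one may assume $\psi_p\equiv 0$. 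Under the rescaling $z=u/\sqrt{A_p}$, the quadratic part becomes $\sum_j(\lambda_j^p/A_p)|u_j|^2$ whose coefficients converge to $\pi/2$ by \eqref{lamdas}, and the cubic remainder satisfies $|\widetilde\varphi_p(u/\sqrt{A_p})|\le C\|h_p\|_3\|u\|^3/A_p^{3/2}$, which tends to zero uniformly on compact sets by the hypothesis $\|h_p\|_3^{1/3}/\sqrt{A_p}\to 0$.

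Next, the model Bergman kernel on $\mathbb{C}^n$ with weight $e^{-2\sum_j\lambda_j^p|z_j|^2}$ is given explicitly by $K^{\mathrm{mod}}_p(z,\bar w)=\prod_j\tfrac{2\lambda_j^p}{\pi}e^{2\lambda_j^p z_j w_j}$. A direct computation at $z=u/\sqrt{A_p}$ and $\bar w=\bar v/\sqrt{A_p}$ gives
$|K^{\mathrm{mod}}_p|^2 e^{-2\varphi^{\mathrm{mod}}_p(u/\sqrt{A_p})}e^{-2\varphi^{\mathrm{mod}}_p(\bar v/\sqrt{A_p})}=\prod_j(2\lambda_j^p/\pi)^2 \exp\bigl[-2\sum_j(\lambda_j^p/A_p)|u_j-\bar v_j|^2\bigr],$
and the prefactor is asymptotic to $A_p^{2n}$ by \eqref{lamdas}. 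Thus the model already reproduces the denominator in the statement, and the theorem reduces to showing that the true kernel $K_p$ is asymptotic to $K^{\mathrm{mod}}_p$ in the scaled variables, locally uniformly in $(u,v)$.

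The comparison proceeds by the standard $\bar\partial$-gluing argument, which is the technical crux. Starting from a Bargmann--Fock peak section (a suitable scaled monomial), transport it to $P^n(x,R)$, multiply by a cutoff supported well inside the coordinate patch, and correct the resulting smooth section to a global holomorphic section of $L_p$ by solving $\bar\partial$ with H\"ormander's weighted $L^2$-estimate. The strict positivity of $c_1(L_p,h_p)$ for large $p$, guaranteed by \eqref{appr}, together with the Gaussian decay of the Bargmann--Fock kernel away from the origin, force the $\bar\partial$-correction to be exponentially small in $\sqrt{A_p}$. The variational principle \eqref{variat} applied to this approximately orthonormal family yields the required lower bound on $K_p$, while the matching upper bound is obtained from the reproducing property of $K_p$ together with Theorem~\ref{bkoff}, which localizes the mass of $K_p$ to a shrinking neighborhood of $x$ on which the model is accurate. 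The principal obstacle is quantifying the $\bar\partial$-error uniformly in $p$: this is precisely where the hypothesis $\|h_p\|_3^{1/3}/\sqrt{A_p}\to 0$ enters, since it makes the cubic remainder $\widetilde\varphi_p$ subordinate to the quadratic Bargmann--Fock weight on the relevant scale $\|z\|\sim 1/\sqrt{A_p}$.
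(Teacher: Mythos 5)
First, note that the paper itself does not prove Theorem~\ref{nearbk}: it is imported from the cited references ([BG1, Theorem~4.3] and [Bay16, Theorem~2.3]), so there is no in-paper argument to compare against line by line. Your overall strategy --- gauge away $\mathrm{Re}(\psi_p)$, rescale by $1/\sqrt{A_p}$, kill the cubic remainder using $\|h_p\|_3/A_p^{3/2}=(\|h_p\|_3^{1/3}/\sqrt{A_p})^3\to 0$, and compare to the Bargmann--Fock kernel $\prod_j\frac{2\lambda_j^p}{\pi}e^{2\lambda_j^p z_j\overline{w}_j}$ whose normalized square reproduces the Gaussian in the denominator --- is exactly the standard route taken in those references, and the elementary computations you carry out (the model kernel, the prefactor $\prod_j(2\lambda_j^p/\pi)^2\sim A_p^{2n}$ via \eqref{lamdas}, the scaling of $\widetilde\varphi_p$) are correct.

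The genuine gap is in the final comparison step, which is the entire analytic content of the theorem and which you dispatch in a few sentences. Two specific problems. First, the variational principle \eqref{variat} as available in the paper characterizes only the diagonal value $B_p(x)$; it says nothing about $|K_p(x+u/\sqrt{A_p},\,x+\overline{v}/\sqrt{A_p})|$ for $u\neq \overline{v}$. To control off-diagonal values you need the coherent-state characterization: $K_p(\cdot,y)/\sqrt{B_p(y)}$ is the unit-norm holomorphic section maximizing the modulus at $y$, so the claim reduces to showing that the $\bar\partial$-corrected, glued model peak section centred at $y=x+\overline{v}/\sqrt{A_p}$ differs from this extremal section by $o(1)$ in $L^2$-norm, \emph{uniformly as $y$ ranges over the shrinking polydisk} --- not just at the fixed centre $x$. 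Second, even granting an $L^2$-bound $o(1)$ (or $e^{-c\sqrt{A_p}}$) on the correction, you must convert it into a \emph{pointwise} estimate at scale $1/\sqrt{A_p}$ that beats the size $A_p^{n}$ of the kernel itself; this requires a sub-mean-value inequality for $|S|^2_{h_p}e^{-2\varphi_p}$ on polydisks of radius $\asymp 1/\sqrt{A_p}$, with constants controlled by the curvature bounds --- a step that is standard but quantitative, and is precisely where the uniformity in $p$ (and hence the hypotheses on $\|h_p\|_3$ and the Diophantine condition \eqref{appr}) must be tracked. As written, ``the variational principle applied to this approximately orthonormal family yields the required lower bound'' and ``the matching upper bound is obtained from the reproducing property together with Theorem~\ref{bkoff}'' name the right ingredients but do not constitute a proof of the two-sided locally uniform asymptotic equality; Theorem~\ref{bkoff} in particular only gives an upper bound of the correct order of magnitude, not the asymptotic constant. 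You should either carry out the coherent-state comparison in detail or, as the authors do, invoke the already-published scaling asymptotics directly.
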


As an immediate consequence of Theorems \ref{bkdiag} and \ref{nearbk}, we obtain 
the following near-diagonal asymptotics for the normalized Bergman kernel.
\begin{thm}\label{normbk}
Under the same assumptions as in Theorem~\ref{nearbk}, the normalized Bergman kernel satisfies the following asymptotics
\begin{equation*}
  N_p\Bigl(x + \frac{u}{\sqrt{A_p}},\, x + \frac{\overline{v}}{\sqrt{A_p}}\Bigr)
  \;=\;
  \exp\Bigl(-\sum_{j=1}^n \frac{\lambda_j^p}{A_p}\,\lvert u_j-\overline{v}_j\rvert^2\Bigr)
  \Bigl(1+o(1)\Bigr),
\end{equation*}
locally uniformly on $\mathbb{C}_u^n \times \mathbb{C}_v^n$ as $p \to \infty$.
\end{thm}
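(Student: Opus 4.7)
The plan is to combine the diagonal Bergman kernel asymptotics (Theorem~\ref{bkdiag}) with the near-diagonal expansion of $|K_p|^2_{h_p}$ (Theorem~\ref{nearbk}). Since the normalized Bergman kernel is built purely from these two quantities, the result reduces to a direct substitution.

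In more detail, the definition of $N_p$ gives
\begin{equation*}
N_p^2\Bigl(x+\tfrac{u}{\sqrt{A_p}},\,x+\tfrac{\overline{v}}{\sqrt{A_p}}\Bigr)
= \frac{\bigl|K_p\bigl(x+\tfrac{u}{\sqrt{A_p}},\,x+\tfrac{\overline{v}}{\sqrt{A_p}}\bigr)\bigr|^2_{h_p}}{B_p\bigl(x+\tfrac{u}{\sqrt{A_p}}\bigr)\,B_p\bigl(x+\tfrac{\overline{v}}{\sqrt{A_p}}\bigr)}.
\end{equation*}
For the numerator, Theorem~\ref{nearbk} says exactly that
\begin{equation*}
\bigl|K_p\bigr|^2_{h_p} = A_p^{2n}\exp\Bigl(-2\sum_{j=1}^{n}\tfrac{\lambda_j^p}{A_p}|u_j-\overline{v}_j|^2\Bigr)(1+o(1))
\end{equation*}
locally uniformly in $(u,v)\in\mathbb{C}^n_u\times\mathbb{C}^n_v$. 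For the denominator, Theorem~\ref{bkdiag} yields $B_p(y)=A_p^n(1+o(1))$ uniformly in $y\in X$; specializing to the two shifted points, which always lie in $X$, gives $B_p\bigl(x+\tfrac{u}{\sqrt{A_p}}\bigr)\,B_p\bigl(x+\tfrac{\overline{v}}{\sqrt{A_p}}\bigr)=A_p^{2n}(1+o(1))$. Dividing, the factors $A_p^{2n}$ cancel, and taking the positive square root (which is allowed since $N_p\geq 0$) together with $\sqrt{1+o(1)}=1+o(1)$ produces the claimed asymptotic.

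The only point requiring mild attention is the locally uniform character of the error. The $o(1)$ coming from Theorem~\ref{bkdiag} is uniform on all of $X$ and is therefore independent of $(u,v)$. The $o(1)$ coming from Theorem~\ref{nearbk} is locally uniform on $\mathbb{C}^n_u\times\mathbb{C}^n_v$ by hypothesis. Since on any compact subset of $\mathbb{C}^n_u\times\mathbb{C}^n_v$ the Gaussian factor $\exp\bigl(-2\sum_j \tfrac{\lambda_j^p}{A_p}|u_j-\overline{v}_j|^2\bigr)$ is bounded below by a positive constant (using $\lambda_j^p/A_p\to\pi/2$ from \eqref{lamdas}), extracting the $(1+o(1))$ factor preserves locally uniform convergence. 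Thus no genuine obstacle arises: the statement is really a corollary, and the substantive content lies entirely in Theorems~\ref{bkdiag} and \ref{nearbk}.
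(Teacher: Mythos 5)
Your argument is correct and is exactly the derivation the paper intends: the paper states Theorem~\ref{normbk} as an immediate consequence of Theorems~\ref{bkdiag} and~\ref{nearbk}, obtained by recognizing the numerator in Theorem~\ref{nearbk} (with the weight factors $e^{-2\varphi_p}$) as $|K_p|^2_{h_{p,x}\otimes h_{p,y}^*}$, dividing by $B_p(x+\tfrac{u}{\sqrt{A_p}})\,B_p(x+\tfrac{\overline{v}}{\sqrt{A_p}})=A_p^{2n}(1+o(1))$, and taking square roots. Your extra remark about lower-bounding the Gaussian factor on compact sets is harmless but unnecessary, since the multiplicative error $1+o(1)$ already passes through the square root directly.
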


\section{Mass Equidistribution for Random Holomorphic Sections} \label{Section3}

In this section, we establish the equidistribution of masses for random holomorphic sections associated with the given geometric data. In other words, we prove that for almost every sequence of random holomorphic sections, the normalized mass measures converge to the canonical volume measure on $X$, which is a manifestation of quantum ergodicity.

Let $\{S_j^p\}_{j=1}^{d_p}$ be an orthonormal basis (ONB) for $H^0(X, L_p)$ with respect to the inner product defined in \eqref{innerprod}. A Gaussian random holomorphic section is given by
\begin{equation}
    s_p = \sum_{j=1}^{d_p} a_j^p S_j^p,
\end{equation}
where $\{a_j^p\}_{j=1}^{d_p}$ are independent and identically distributed (i.i.d.) standard complex Gaussian random variables. That is,  the real and imaginary parts of each coefficient, $\operatorname{Re}(a_j^p)$ and $\operatorname{Im}(a_j^p)$, are independent real Gaussian random variables with mean zero and variance $1/2$. Equivalently, the coefficients satisfy 
\begin{equation*}
\mathbf{E}[a_j^p] = 0, \quad \mathbf{E}[a_j^p \overline{a_k^p}] = \delta_{jk}, \quad \text{and} \quad \mathbf{E}[a_j^p a_k^p] = 0.
\end{equation*}
This construction induces a natural $d_p$-fold probability measure $\mathbb{P}_p$ on $H^0(X, L_p)$ via the identification  
\begin{equation*}
H^0(X, L_p) \simeq \mathbb{C}^{d_p}, \quad s_p \mapsto (a_1^p, \dots, a_{d_p}^p),
\end{equation*}
under which $\mathbb{P}_p$ corresponds to the standard Gaussian measure on $\mathbb{C}^{d_p}$. Since this measure is invariant under unitary transformations, $\mathbb{P}_p$ does not depend on the choice of ONB and thus defines a canonical measure on $H^0 (X,L_p).$

We begin by analyzing the expected distribution of masses for random holomorphic sections which provides the first step in understanding their asymptotic almost sure behaviour.

\begin{prop}\label{ExpMass}
    Let $(X, \omega)$ be a compact K\"{a}hler manifold of complex dimension $n$, and let $\{(L_p, h_p)\}_{p \geq 1}$ be a sequence of holomorphic line bundles equipped with Hermitian metrics $h_p$ of class $\mathscr{C}^3$, satisfying the Diophantine approximation condition, and $\|h_p\|_3 ^{1/3} / \sqrt{A_p}\to 0$ as $p \to \infty$. Then 
    \begin{equation}
        \mathbb{E}[\mathcal{M}_p(s_p)] \to dV 
    \end{equation}
    as $p \to \infty$ in the weak-* topology of measures on $X$.
\end{prop}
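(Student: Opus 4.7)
The plan is to compute $\mathbb{E}[\mathcal{M}_p(s_p)]$ explicitly and recognize it as a multiple of the Bergman kernel function, then invoke Theorem~\ref{bkdiag} for the uniform diagonal asymptotics.

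First I would expand a random section in the chosen orthonormal basis, writing
\begin{equation*}
|s_p(x)|_{h_p}^2 \;=\; \sum_{j,k=1}^{d_p} a_j^p\,\overline{a_k^p}\,\bigl\langle S_j^p(x),\,S_k^p(x)\bigr\rangle_{h_p}.
\end{equation*}
Using the defining moment relations $\mathbb{E}[a_j^p\overline{a_k^p}]=\delta_{jk}$ for i.i.d.\ standard complex Gaussians, taking expectation collapses the double sum to the diagonal, giving
\begin{equation*}
\mathbb{E}\bigl[|s_p(x)|_{h_p}^2\bigr] \;=\; \sum_{j=1}^{d_p}|S_j^p(x)|_{h_p}^2 \;=\; B_p(x),
\end{equation*}
by the definition \eqref{diags} of the Bergman kernel function on the diagonal. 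Hence
\begin{equation*}
\mathbb{E}[\mathcal{M}_p(s_p)] \;=\; \frac{B_p(x)}{A_p^n}\,dV_X.
\end{equation*}

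Next I would invoke Theorem~\ref{bkdiag}, which under the current hypotheses (Diophantine approximation and $\|h_p\|_3^{1/3}/\sqrt{A_p}\to 0$) yields $B_p(x)/A_p^n = 1+o(1)$ \emph{uniformly} on $X$. Hence $B_p/A_p^n \to 1$ in $\mathscr{C}^0(X)$, and for any test function $\phi\in\mathscr{C}^0(X,\mathbb{R})$,
\begin{equation*}
\int_X \phi(x)\,\frac{B_p(x)}{A_p^n}\,dV_X(x) \;\longrightarrow\; \int_X \phi(x)\,dV_X(x),
\end{equation*}
by a standard argument bounding $\bigl|\int \phi(B_p/A_p^n - 1)\,dV_X\bigr| \le \|\phi\|_\infty\,\mathrm{Vol}(X)\,\sup_X|B_p/A_p^n-1|$. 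This gives the desired weak-* convergence.

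There is no serious obstacle here; the proposition is essentially a direct corollary of the Bergman kernel diagonal asymptotics once the Gaussian expectation has been carried out. The entire content is that the first-moment measure equals $A_p^{-n}B_p\,dV_X$, whose uniform convergence to $dV_X$ is exactly what Theorem~\ref{bkdiag} provides. The harder work in the paper is deferred to the almost sure statement (a law of large numbers type result) and to the central limit theorem, which require the off-diagonal decay in Theorem~\ref{bkoff} and the near-diagonal scaling in Theorems~\ref{nearbk}--\ref{normbk} for variance control.
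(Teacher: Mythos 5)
Your proposal is correct and follows essentially the same route as the paper: compute $\mathbb{E}\bigl[|s_p(x)|_{h_p}^2\bigr]=B_p(x)$ from the Gaussian moment relations, interchange expectation and integration, and conclude by the uniform diagonal asymptotics of Theorem~\ref{bkdiag}. The only cosmetic difference is that the paper explicitly names Fubini's theorem for the interchange, which your argument performs implicitly.
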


\begin{proof} Observe that for any $x\in X$ we have
$$|s_p(x)|_{h_p}^2= \sum_{1\leq j,k\leq d_p} a_{j}^p \, \overline{a_{k}^p} \, \big\langle S_j ^p (x), S_k ^p (x) \big\rangle_{h_p}.$$ Since $a_j ^p$ are i.i.d. of mean zero and variance one, we have $\mathbb{E}[|s_p(x)|_{h_p}^2]=B_p (x)$ for every $x\in X$ . Then by Fubini's Theorem, for any $\phi \in \mathscr{C}^0 (X)$, we have 
$$\mathbb{E}\Big[ \int_{X}\phi(x) \mathcal{M}_p (s_p)(x)\Big]=\frac{1}{A_p ^n}\int_{X}\phi(x)B_p (x)dV(x)$$
Hence the assertion follows from Theorem \ref{bkdiag}.
\end{proof}
Let $\phi:X\rightarrow \mathbb{C}$ be a continuous function, then consider the following random variables as defined in the introduction, namely
$$\mathcal{M}_{p}^{\phi}: H^0 (X,L_p) \rightarrow \mathbb{C}$$ defined as 
$$\mathcal{M}_{p}^\phi(s_p):=\frac{1}{A_p^n}\int_{X}|s_p(x)|_{h_p}^2 \phi(x)dV(x)$$
\begin{rem}
    In this section, we allow $\phi$ to be complex-valued. However, in Section 4, we restrict $\phi$ to be real-valued, which is crucial.
\end{rem}

\begin{thm}\label{varmass}
Under the same geometric assumptions as in Proposition \ref{ExpMass}, we have
\begin{equation}
    \mathrm{Var}[\mathcal{M}_{p}^{\phi}(s_p)] = \frac{1}{A_p^n} \int_X |\phi(x)|^2 \, dV(x) + o\left(\frac{1}{A_p^n}\right).
\end{equation}
\end{thm}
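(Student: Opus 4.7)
The plan is to reduce the variance to a double integral involving the squared Bergman kernel, then localize via the off-diagonal decay and extract the leading term from a Gaussian integral.

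First, setting $M_{jk}^p := \int_X \phi(x)\,\langle S_j^p(x), S_k^p(x)\rangle_{h_p}\,dV(x)$, one has $\mathcal{M}_p^\phi(s_p) = A_p^{-n}\sum_{j,k} a_j^p\,\overline{a_k^p}\,M_{jk}^p$. A Wick/Isserlis computation for i.i.d.\ standard complex Gaussians yields $\mathbb{E}[a_j^p\overline{a_k^p}a_m^p\overline{a_l^p}] = \delta_{jk}\delta_{ml} + \delta_{jl}\delta_{mk}$; the first pairing reproduces $|\mathbb{E}[\mathcal{M}_p^\phi]|^2$ and the second gives $\mathrm{Var}[\mathcal{M}_p^\phi(s_p)] = A_p^{-2n}\sum_{j,k}|M_{jk}^p|^2$. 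Expanding the sum and working in a local holomorphic frame, a short algebraic identity shows $\sum_{j,k}\langle S_j^p(x), S_k^p(x)\rangle_{h_p}\,\overline{\langle S_j^p(y), S_k^p(y)\rangle_{h_p}} = |K_p(x,y)|_{h_p}^2$, so
\begin{equation*}
\mathrm{Var}[\mathcal{M}_p^\phi(s_p)] \;=\; \frac{1}{A_p^{2n}}\int_X\!\int_X \phi(x)\,\overline{\phi(y)}\,|K_p(x,y)|_{h_p}^2\,dV(x)\,dV(y).
\end{equation*}

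Next I would split this double integral at the cutoff $d(x,y) \leq b_p := K\,\log A_p/\sqrt{A_p}$ with $K$ large. On the far-diagonal set, Theorem~\ref{bkoff} gives $|K_p(x,y)|_{h_p}^2 \leq C_1 A_p^{2n}\,A_p^{-C_2 K}$, so the contribution of that region to the double integral is bounded by a constant times $A_p^{2n-C_2 K}\|\phi\|_\infty^2\,\mathrm{Vol}(X)^2$, which is $o(A_p^n)$ as soon as $C_2 K > n$ and therefore produces an error $o(A_p^{-n})$ in the variance. For the near-diagonal piece, I would use the finite reference cover of Definition~\ref{refed} and, at each base point $x$, employ the K\"ahler coordinates supplied by Lemma~\ref{refe} to set $y = x + w/\sqrt{A_p}$, so $dV(y) = A_p^{-n}(1+o(1))\,dV_{\mathbb{C}^n}(w)$. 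Theorems~\ref{bkdiag} and \ref{normbk} combine to give
\begin{equation*}
|K_p(x,y)|_{h_p}^2 \;=\; A_p^{2n}\exp\!\Bigl(-2\sum_{j=1}^{n}\tfrac{\lambda_j^p}{A_p}|w_j|^2\Bigr)(1+o(1))
\end{equation*}
locally uniformly in $w$, while uniform continuity of $\phi$ on the compact $X$ lets one replace $\phi(y)$ by $\phi(x)+o(1)$. The resulting Gaussian integral in $w$, extended to all of $\mathbb{C}^n$ up to an exponentially small tail correction, equals $\pi^n A_p^n/(2^n\lambda_1^p\cdots\lambda_n^p)$, which tends to $1$ by (\ref{lamdas}). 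Integrating over $x$ then produces $A_p^{-n}\int_X|\phi(x)|^2\,dV(x) + o(A_p^{-n})$, as claimed.

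The principal obstacle I anticipate is securing the asymptotic expansion of Theorem~\ref{normbk} \emph{uniformly in the base point $x$}, which is what allows the change-of-variables argument to yield a single $o(A_p^{-n})$ error after the outer $x$-integration; local uniformity in $w$ alone is insufficient. By compactness of $X$ and finiteness of the reference cover, this uniformity is built into the constructions of \cite{CMM} and \cite{BG1}, and once it is granted the off-diagonal truncation and the Gaussian integration are essentially routine.
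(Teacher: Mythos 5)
Your proposal is correct and follows essentially the same route as the paper: both reduce the variance to $A_p^{-2n}\int_{X\times X}\phi(x)\overline{\phi(y)}\,|K_p(x,y)|^2_{h_p}\,dV(x)\,dV(y)$ (the paper via Zelditch's fourth-moment identity $\mathbb{E}[\cdot]=1+N_p(x,y)^2$, you via an equivalent direct Wick computation on the coefficients), and then both localize with the off-diagonal decay of Theorem~\ref{bkoff} at scale $\log A_p/\sqrt{A_p}$ and evaluate the near-diagonal Gaussian integral using Theorems~\ref{bkdiag}, \ref{normbk} and \eqref{lamdas}. Your closing remark about needing the near-diagonal expansion uniformly in the base point $x$ is a fair point that the paper treats implicitly through the finite reference cover.
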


\begin{proof}
We begin by writing the second moment of $\mathcal{M}_p^\phi(s_p)$
\begin{align}\label{earliear}
\mathbb{E}\big[|\mathcal{M}_{p}^{\phi}(s_p)|^2\big] &= \frac{1}{A_p^{2n}} \int_{X \times X} \mathbb{E}\big[|s_p(x)|^2_{h_p} |s_p(y)|^2_{h_p}\big] \phi(x) \overline{\phi(y)}  \, dV(x) \, dV(y) \\
&= \frac{1}{A_p^{2n}} \int_{X \times X} B_p(x) B_p(y) \mathbb{E}\left[\frac{|s_p(x)|_{h_p}^2}{B_p(x)} \frac{|s_p(y)|_{h_p}^2}{B_p(y)}\right] \phi(x) \overline{\phi(y)} \, dV(x) \, dV(y).
\end{align}

By Lemma 5.2 of \cite{Zel18}, we have the following 
\begin{equation}
\mathbb{E}\left[\frac{|s_p(x)|_{h_p}^2}{B_p(x)} \frac{|s_p(y)|_{h_p}^2}{B_p(y)}\right] = 1 + N_p(x,y)^2.
\end{equation}
Substituting this into the earlier expression \eqref{earliear}, we obtain
\begin{equation}
\mathbb{E}\big[|\mathcal{M}_{p}^{\phi}(s_p)|^2\big] = \big|\mathbb{E}\big[|\mathcal{M}_{p}^{\phi}(s_p)|\big]\big|^2 + \frac{1}{A_p^{2n}} \int_{X \times X} \phi(x) \overline{\phi(y)}  B_p(x) B_p(y) N_p(x,y)^2 \, dV(x) \, dV(y).
\end{equation}
Hence,
\begin{equation}
\mathrm{Var}[\mathcal{M}_{p}^{\phi}(s_p)] = \frac{1}{A_p^{2n}} \int_{X \times X} B_p(x) B_p(y) N_p(x,y)^2 \phi(x) \overline{\phi(y)}  \, dV(x) \, dV(y).
\end{equation}

Now, to study the asymptotic behavior of the variance, we fix a constant $b>0$, which will later be taken sufficiently large, and divide the integration region into two parts:
\begin{itemize}
    \item \emph{Far-off diagonal region}: $\{ (x,y) \in X \times X : d(x,y) \geq b \frac{\log A_p}{\sqrt{A_p}} \}$,
    \item \emph{Near-diagonal region}: $\{ (x,y) \in X \times X : d(x,y) < b \frac{\log A_p}{\sqrt{A_p}} \}$.
\end{itemize}

In the far-off diagonal region,  Theorems~\ref{bkoff} and \ref{bkdiag} imply that for any $b \geq \frac{n+1}{C_2}$ and  sufficiently large $p$, we have $N_p(x,y) = O(A_p^{-n-1})$. Therefore, the contribution from this region is bounded by $O(A_p^{-n-1})$, which is negligible relative to the main term order $1/A_p^{n}$ in the limit as $p \to \infty$. Thus, the contribution from the far-off diagonal set does not affect the leading-order behavior of the variance.

Now, we consider the near-diagonal region. Let $x \in X$ and choose local coordinates $z = (z_1, \dots, z_n)$ on  $P^n(x, R)$, centered at $x \equiv 0$ and K\"{a}hler at $x$ (provided by Reference Cover Lemma). Within this chart, we make the change of variables $y = x + v / \sqrt{A_p}$ which corresponds to $z(y)=v/\sqrt{A_p}$ in coordinates. Since $z(x)=0$ and the coordinates are K\"{a}hler  at $x$, we have
$$
\omega = \frac{i}{2} \sum_{j=1}^n dz_j \wedge d\bar{z}_j + O(\|z\|^2),
$$
Substituting  $y = x + v/\sqrt{A_p}$, we get
\begin{equation}
    \omega(x + v/\sqrt{A_p}) = \frac{i}{2A_p} \sum_{j=1}^n dv_j \wedge d\bar{v}_j + O\left(\frac{\|v\|^2}{A_p^2}\right).
\end{equation}
Thus, the volume form becomes
\begin{equation}
    dV\left(x + \frac{v}{\sqrt{A_p}}\right) =\frac{1}{n!}\Bigg[\frac{i}{2A_p}\sum_{j=1}^{n}dv_j \wedge d \bar{v}_j + O\left(\frac{\|v\|^2}{A_p ^2}\right)\Bigg]^n= \frac{1}{A_p^n} \left[ dV_E(v) + O\left(\frac{(\log A_p)^2}{A_p}\right) \right],
\end{equation}
for $\|v\|\leq b\log A_p$, where $dV_E(v) = \prod_{j=1}^n \frac{i}{2} dv_j \wedge d\bar{v}_j$ is the Euclidean volume form on $\mathbb{C}^n$.

Since $\phi \in \mathscr{C}^0(X)$ is uniformly continous on $X$, we have $\phi(x + v/\sqrt{A_p}) = \phi(x) + o(1)$ uniformly for $x\in X$ and $\|v\|\leq b\log A_p$ as $p\rightarrow \infty$. Also, by Theorems \ref{bkdiag} and \ref{normbk},  we obtain the following expression for the variance

\begin{align*}
\mathrm{Var}[\mathcal{M}_{p}^{\phi}(s_p)] = 
\frac{1}{A_p^{2n}} &\int_X \phi(x) \left( A_p^n + o(A_p^n) \right) \times \int_{\|v\| \leq b \log A_p} e^{-2 \sum_{j=1}^n \frac{\lambda_j^p}{A_p} |v_j|^2} \left(1 + o(1)\right) \left( A_p^n + o(A_p^n) \right) \\
&  \quad\times \left(\overline{\phi(x)}  + o(1)\right) \frac{1}{A_p^n} \left[ dV_E(v) + O\left(\frac{(\log A_p)^2}{A_p}\right) \right] \, dV(x).
\end{align*}

Simplifying, we get
\begin{equation}
\mathrm{Var}[\mathcal{M}_{p}^{\phi}(s_p)] = \frac{1}{A_p^n} \int_X |\phi(x)|^2 \, dV(x) \int_{\|v\| \leq b \log A_p} e^{-2 \sum_{j=1}^n \frac{\lambda_j^p}{A_p} |v_j|^2} \, dV_E(v) + o\left(\frac{1}{A_p^n}\right).
\end{equation}

Next, we evaluate the asymptotics of the inner integral. Note that since $\lambda_j ^p  /A_p\rightarrow \pi/2$ as $p\rightarrow \infty$, for sufficiently large $b$, we have
\begin{equation}
    \int_{\|v\| \geq b \log A_p} e^{-2 \sum_{j=1}^n \frac{\lambda_j^p}{A_p} |v_j|^2} \, dV_E(v) = O(A_p^{-n-1}).
\end{equation}
Therefore, we can extend the integral over  $\|v\| \geq b \log A_p$ to the entire space $\mathbb{C}^n$. Furthermore, since
\begin{equation}
    \int_{\mathbb{C}^n} e^{-2 \sum_{j=1}^n \frac{\lambda_j^p}{A_p} |v_j|^2} \, dV_E(v) = \frac{\pi^n}{2^n} \frac{A_p^n}{\lambda_1^p \cdots \lambda_n^p} = 1 + o(1),
\end{equation}
we obtain the desired first-order asymptotics for variance.
\end{proof}

In order to prove an almost sure result, we consider the product probability space
\begin{equation*}
(\mathscr{H}, \mathbb{P}) = \prod_{p=1}^\infty \big( H^0(X, L_p), \mathbb{P}_p \big),
\end{equation*}
whose elements are sequences $(s_1, s_2, \ldots)$ of random holomorphic sections.

\begin{thm}
    Let $(X, \omega)$ be a compact K\"{a}hler manifold of complex dimension $n$, and let $\{(L_p, h_p)\}_{p \geq 1}$ be a sequence of holomorphic line bundles equipped with Hermitian metrics $h_p$ of class $\mathscr{C}^3$, satisfying the Diophantine approximation condition, and $\|h_p\|_3 ^{1/3} / \sqrt{A_p}\to 0$ as $p \to \infty$.  If 
    \begin{equation*}
    \sum_{p=1}^\infty \frac{1}{A_p^n} < \infty,
    \end{equation*}
    then for $\mathbb{P}$-almost every sequence of random holomorphic sections, we have
    \begin{equation}
        \mathcal{M}_p(s_p) \to dV
    \end{equation}
    as $p \to \infty$ in the weak-* topology of measures on $X$.
\end{thm}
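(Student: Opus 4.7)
The plan is to upgrade the first- and second-moment information already developed in Proposition~\ref{ExpMass} and Theorem~\ref{varmass} to an almost-sure statement by a standard Borel--Cantelli argument applied to a countable dense set of test functions, and then to use density and a uniform total-mass bound to pass to all continuous test functions.

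First, I fix a test function $\phi \in \mathscr{C}^0(X,\mathbb{C})$ and consider the centered random variable
$Y_p^\phi := \mathcal{M}_p^\phi(s_p) - \mathbb{E}[\mathcal{M}_p^\phi(s_p)]$. By Theorem~\ref{varmass}, there is a constant $C_\phi>0$ such that $\mathrm{Var}(Y_p^\phi) \le C_\phi/A_p^n$ for all large $p$. Chebyshev's inequality gives, for every $\varepsilon>0$,
\begin{equation*}
\mathbb{P}\bigl(|Y_p^\phi| > \varepsilon\bigr) \;\le\; \frac{C_\phi}{\varepsilon^2\, A_p^n},
\end{equation*}
and the summability hypothesis $\sum_p 1/A_p^n <\infty$ together with the Borel--Cantelli lemma yields $Y_p^\phi \to 0$ almost surely. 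Combining this with Proposition~\ref{ExpMass} (which gives $\mathbb{E}[\mathcal{M}_p^\phi(s_p)] \to \int_X \phi\, dV$), I obtain that for each fixed $\phi$, $\mathcal{M}_p^\phi(s_p) \to \int_X \phi\, dV$ almost surely.

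Next I choose a countable dense subset $\{\phi_k\}_{k\ge 1}$ of $\mathscr{C}^0(X,\mathbb{R})$; such a set exists because $X$ is a compact metric space and $\mathscr{C}^0(X,\mathbb{R})$ is separable. For each $k$, let $\Omega_k \subset \mathscr{H}$ denote the $\mathbb{P}$-full-measure event on which $\mathcal{M}_p^{\phi_k}(s_p) \to \int_X \phi_k\, dV$. Setting $\Omega_\infty := \bigcap_{k\ge 1} \Omega_k$ gives another $\mathbb{P}$-full-measure event on which convergence holds simultaneously for all $\phi_k$. In particular, taking $\phi_k \equiv 1$ (which I may include in the dense set) shows that along $\Omega_\infty$ the total masses $\mathcal{M}_p(s_p)(X)$ converge to $\mathrm{Vol}(X)$, and hence are uniformly bounded in $p$.

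Finally, on $\Omega_\infty$ a standard $3\varepsilon$-approximation argument extends the convergence to every $\phi \in \mathscr{C}^0(X,\mathbb{R})$: given $\phi$ and $\varepsilon>0$, choose $\phi_k$ with $\|\phi - \phi_k\|_\infty < \varepsilon$, and estimate
\begin{equation*}
\bigl|\mathcal{M}_p^\phi(s_p) - \textstyle\int_X \phi\, dV \bigr| \;\le\; \|\phi-\phi_k\|_\infty \bigl(\mathcal{M}_p(s_p)(X) + \mathrm{Vol}(X)\bigr) \;+\; \bigl|\mathcal{M}_p^{\phi_k}(s_p) - \textstyle\int_X \phi_k\, dV \bigr|,
\end{equation*}
whose right-hand side is eventually $\le (2\,\mathrm{Vol}(X)+1)\varepsilon$ on $\Omega_\infty$. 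Complex-valued $\phi$ are handled by splitting into real and imaginary parts. This gives weak-$*$ convergence $\mathcal{M}_p(s_p) \to dV$ on $\Omega_\infty$, proving the theorem. The only mildly delicate point is ensuring the uniform total-mass bound required for the density step, but this is handled almost-surely by including the constant function $1$ in the dense family, so no genuine obstacle arises beyond the estimates already in Proposition~\ref{ExpMass} and Theorem~\ref{varmass}.
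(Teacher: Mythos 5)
Your proof is correct and follows essentially the same route as the paper: the variance estimate of Theorem~\ref{varmass} combined with Chebyshev's inequality and the Borel--Cantelli lemma yields almost-sure convergence of $\mathcal{M}_p^{\phi}(s_p)$ to $\int_X \phi\, dV$ for each fixed test function. You are in fact slightly more careful than the paper, which leaves implicit the passage from a single $\phi$ to all of $\mathscr{C}^0(X)$; your explicit use of a countable dense family together with the total-mass bound obtained from $\phi \equiv 1$ fills in that step cleanly.
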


\begin{proof}
    Consider a random sequence $\mathbf{s} = (s_1, s_2, \ldots)$ in $\mathscr{H}$, and define the random variables
    \begin{equation}
    M_p(\mathbf{s}) := \mathcal{M}_p^\phi(s_p),
    \end{equation}
    where $\phi \in \mathscr{C}^0(X)$ is a test function. Given $\epsilon > 0$, by Proposition \ref{ExpMass}, we know that $\mathbb{E}[\mathcal{M}_p(s_p)] \to dV$ in the weak-* topology. Thus, for sufficiently large $p$,
    \begin{equation}
        \big|\mathbb{E}[\mathcal{M}_p^\phi(s_p)] - \int_X \phi(x) dV(x)\big| \leq \epsilon.
    \end{equation}

   Now, we estimate the probability
    \begin{equation}
    \mathbb{P}\big(|M_p(\mathbf{s}) - \int_X \phi(x) dV(x)| > 2\epsilon\big) = \mathbb{P}_p\big(|\mathcal{M}_p^\phi(s_p) - \int_X \phi(x) dV(x)| > 2\epsilon\big).
    \end{equation}
    Applying the triangle inequality,
    \begin{equation}
    |\mathcal{M}_p^\phi(s_p) - \int_X \phi(x) dV(x)| \leq |\mathcal{M}_p^\phi(s_p) - \mathbb{E}[\mathcal{M}_p^\phi(s_p)]| + |\mathbb{E}[\mathcal{M}_p^\phi(s_p)] - \int_X \phi(x) dV(x)|.
    \end{equation}
    Since the second term is bounded by $\epsilon$ for large $p$, we have
    \begin{equation}
    \mathbb{P}_p\big(|\mathcal{M}_p^\phi(s_p) - \int_X \phi(x) dV(x)| > 2\epsilon\big) \leq \mathbb{P}_p\big(|\mathcal{M}_p^\phi(s_p) - \mathbb{E}[\mathcal{M}_p^\phi(s_p)]| > \epsilon\big).
    \end{equation}
    By Theorem \ref{varmass} and Markov's inequality,
    \begin{equation}
    \mathbb{P}_p\big(|\mathcal{M}_p^\phi(s_p) - \mathbb{E}[\mathcal{M}_p^\phi(s_p)]| > \epsilon\big) \leq \frac{\mathrm{Var}[\mathcal{M}_p^\phi(s_p)]}{\epsilon^2} = \frac{1}{\epsilon^2 A_p^n} \bigg(\int_X |\phi(x)|^2 dV(x) + o(1)\bigg).
    \end{equation}

    Since $\sum_{p=1}^\infty A_p^{-n} < \infty$, the Borel-Cantelli lemma implies that for $\mathbb{P}$-almost every sequence $\mathbf{s} \in \mathscr{H}$, there exists $p_1(\mathbf{s})$ such that for all $p \geq p_1(\mathbf{s})$,
    \begin{equation}
    \big|M_p(\mathbf{s}) - \int_X \phi(x) dV(x)\big| \leq 2\epsilon.
    \end{equation}

    Finally, as $\epsilon > 0$ is arbitrary, we conclude that
    \begin{equation*}
    \mathcal{M}_p(s_p) \to dV
    \end{equation*}
    in the weak-* topology of measures on $X$, completing the proof.
\end{proof}
\begin{rem}
    This theorem, under a summability condition, generalizes previous results in the literature, which are primarily focused on tensor powers of a prequantum line bundle. In particular, in our setting, if $ (L_p, h_p) = (L^{\otimes p}, h^{\otimes p}) $, where $ L $ is a prequantum line bundle over a compact K\"{a}hler manifold $ X $ of dimension $ n > 1 $, then $ A_p = p $, and the summability condition is automatically satisfied. (Note that $
\|h_p\|_{3} \leq D_h p,
$ where $D_h$ is the constant depending on $h$ and the reference cover. Hence $\|h_p\|_{3}^{1/3} A_p^{-1/2} \leq D_h ^{1/3} p^{-1/6}\rightarrow 0$ as $p\rightarrow \infty$ 
).\end{rem}

\section{Asymptotic Normality for Masses of Random Sections}

The investigation of asymptotic normality in the context of linear statistics of mass distributions of random holomorphic sections, presents a fascinating and unexplored challenge. This is in contrast to the well-studied case of smooth linear statistics of the zero divisors of random holomorphic sections, where asymptotic normality has been established under various settings.

In this direction, the seminal work of Sodin and Tsirelson \cite{ST} is particularly noteworthy. They established an asymptotic normality result for certain nonlinear functionals associated with Gaussian processes in the complex plane, serving as a foundational tool in our subsequent analysis.

\subsection{Central Limit Theorem for Nonlinear Functionals of Complex Gaussian Processes}

Let $(\Omega, \mu)$ be a measure space with a finite positive measure $\mu$, and consider a sequence $\{\psi_j\}_{j=1}^\infty$ of complex-valued measurable functions on $\Omega$ satisfying
\begin{equation}
\sum_{j=1}^\infty |\psi_j(x)|^2 = 1, \quad \forall x \in \Omega.
\end{equation}

A \textit{normalized complex Gaussian process} is defined to be a complex-valued random function $G(x)$ on the measure space $(\Omega, \mu)$ of the form
\begin{equation}\label{eq:gaussian_process}
G(x) = \sum_{j=1}^\infty c_j \psi_j(x),
\end{equation}
where $\{c_j\}_{j=1}^\infty$ is a sequence of i.i.d. centered complex Gaussian random variables with variance 1.

The covariance function $\mathcal{K}: \Omega\times \Omega\rightarrow \mathbb{C}$ of $G(x)$ is then given by
\begin{equation}\label{cov}
\mathcal{K}(x, y) := \mathbb{E}[G(x) \overline{G(y)}] = \sum_{j=1}^\infty \psi_j(x) \overline{\psi_j(y)}.
\end{equation}
Observe that $|\mathcal{K}(x, y)| \leq 1$ and $\mathcal{K}(x, x) = 1$ for all $x, y \in \Omega$.

Now, consider a sequence $\{G_p(x)\}_{p\in \mathbb{N}}$ of normalized complex Gaussian processes defined on a finite measure space $(\Omega, \mu)$. Let $f : \mathbb{R}^+ \to \mathbb{R}$ be a measurable function such that
\begin{equation}
\int_0^\infty f(r)^2 e^{-r^2/2} r \, dr < \infty.
\end{equation}
For a bounded, measurable function $\phi : \Omega \to \mathbb{R}$, define the nonlinear functionals
\begin{equation}\label{nonlin}
\mathcal{I}_p^\phi(G_p) = \int_\Omega f(|G_p(x)|) \phi(x) \, d\mu(x).
\end{equation}

The following theorem, due to Sodin and Tsirelson \cite[Theorem 2.2]{ST}, establishes the asymptotic normality of $\ \mathcal{I}_p^\phi(G_p)$.

\begin{thm}\label{sots}
Let $\mathcal{K}_p(x, y)$ denote the covariance function of $G_p(x)$. Assume the following conditions hold for all $\alpha \in \mathbb{N}$:
\begin{itemize}
    \item[(i)] \begin{equation*}
    \liminf_{p \to \infty} \frac{\int_\Omega \int_\Omega |\mathcal{K}_p(x, y)|^{2\alpha} \phi(x) \phi(y) \, d\mu(x) \, d\mu(y)}{\sup_{x \in \Omega} \int_\Omega |\mathcal{K}_p(x, y)| \, d\mu(y)} > 0.
    \end{equation*}
    \item[(ii)] \begin{equation*}
    \lim_{p \to \infty} \sup_{x \in \Omega} \int_\Omega |\mathcal{K}_p(x, y)| \, d\mu(y) = 0.
    \end{equation*}
\end{itemize}

Then the distributions of normalized random variables
\begin{equation*}
\frac{\mathcal{I}_p^\phi(G_p) - \mathbb{E}[\mathcal{I}_p^\phi(G_p)]}{\sqrt{\mathrm{Var}[\mathcal{I}_p^\phi(G_p)]}}
\end{equation*}
converge weakly to the (real) standard normal distribution $\mathcal{N}_{\mathbb{R}}(0, 1)$ as $p \to \infty$.
Moreover, if $f$ is an increasing function, then condition $(i)$ only needs to hold for $\alpha = 1$.
\end{thm}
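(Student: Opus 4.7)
My plan is to prove the theorem by combining a complex Wiener chaos expansion of $f(|G_p(\cdot)|)$ in Laguerre polynomials with the classical method of moments, following the scheme originally used by Sodin--Tsirelson. At each fixed $x\in\Omega$ the variable $|G_p(x)|^2$ is standard exponential, so the Laguerre polynomials $\{L_\alpha\}_{\alpha\ge 0}$ form an orthonormal basis of $L^2(\mathbb{R}_+,e^{-t}\,dt)$, and the integrability hypothesis on $f$ yields the $L^2$-expansion
\begin{equation*}
f(|G_p(x)|) \;=\; \sum_{\alpha=0}^{\infty} c_\alpha\,L_\alpha(|G_p(x)|^2),
\end{equation*}
with real coefficients $c_\alpha$ depending only on $f$ and satisfying $\sum_\alpha c_\alpha^2<\infty$. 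The crucial pointwise identity is that for a centered jointly complex Gaussian pair $(G,G')$ with $\mathbb{E}|G|^2=\mathbb{E}|G'|^2=1$, $\mathbb{E}[GG']=0$, and $\mathbb{E}[G\overline{G'}]=\rho$, one has $\mathbb{E}[L_\alpha(|G|^2)L_\beta(|G'|^2)]=\delta_{\alpha\beta}|\rho|^{2\alpha}$. Applied with $G=G_p(x)$, $G'=G_p(y)$ together with Fubini, this diagonalizes the covariances of the chaos components $F_\alpha(p):=\int_\Omega L_\alpha(|G_p(x)|^2)\phi(x)\,d\mu(x)$:
\begin{equation*}
\mathrm{Cov}(F_\alpha(p),F_\beta(p)) \;=\; \delta_{\alpha\beta}\,V_\alpha(p), \quad V_\alpha(p):=\iint_{\Omega^2}|\mathcal{K}_p(x,y)|^{2\alpha}\phi(x)\phi(y)\,d\mu(x)d\mu(y),
\end{equation*}
so that $\sigma_p^2:=\mathrm{Var}(\mathcal{I}_p^\phi)=\sum_{\alpha\ge 1}c_\alpha^2V_\alpha(p)$.

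Next I would truncate. Setting $\kappa_p:=\sup_x\int_\Omega|\mathcal{K}_p(x,y)|\,d\mu(y)$, the inequality $|\mathcal{K}_p|^{2\alpha}\le|\mathcal{K}_p|$ for $\alpha\ge 1$ yields $V_\alpha(p)\le \|\phi\|_\infty^2\mu(\Omega)\kappa_p$ uniformly in $\alpha$, while condition~(i) at $\alpha=1$ (respectively, at each $\alpha$ in the non-monotone case) gives the matching lower bound $\sigma_p^2\gtrsim V_1(p)\gtrsim\kappa_p$. Hence $\sigma_p^2\asymp\kappa_p$, so for any $\epsilon>0$ one may fix $N=N(\epsilon)$ with $\sum_{\alpha>N}c_\alpha^2<\epsilon$ and thereby ensure that the tail $\sigma_p^{-2}\sum_{\alpha>N}c_\alpha^2V_\alpha(p)=O(\epsilon)$ uniformly in $p$. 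It therefore suffices to establish a CLT for the truncated normalized sum $\widetilde S_{p,N}:=\sigma_p^{-1}\sum_{\alpha=1}^N c_\alpha(F_\alpha(p)-\mathbb{E}F_\alpha(p))$ and let $\epsilon\to 0$.

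For $\widetilde S_{p,N}$ I would apply the method of moments via a Wick/Feynman diagram formula adapted to complex Laguerre polynomials. Each centered mixed moment $\mathbb{E}\bigl[\prod_{k=1}^m(F_{\alpha_k}(p)-\mathbb{E}F_{\alpha_k}(p))\bigr]$ expands as a sum over labeled multigraphs $\Gamma$ on the $m$ vertices with nonnegative edge multiplicities $e_{ij}$ satisfying chaos constraints at each vertex; the contribution of $\Gamma$ is
\begin{equation*}
W_\Gamma(p)\;=\;c_\Gamma\int_{\Omega^m}\phi(x_1)\cdots\phi(x_m)\prod_{1\le i<j\le m}|\mathcal{K}_p(x_i,x_j)|^{2e_{ij}}\,d\mu(x_1)\cdots d\mu(x_m).
\end{equation*}
Diagrams corresponding to pair partitions of $\{1,\dots,m\}$ into $m/2$ disjoint pairs, each joined by a single double-edge of weight $2\alpha$ with $\alpha_i=\alpha_j=\alpha$ at the two endpoints, reproduce the Wick formula; summing them yields the moments of a centered Gaussian with variance $\sum_\alpha c_\alpha^2V_\alpha(p)/\sigma_p^2\to 1$.

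The main obstacle, and combinatorial heart of the Sodin--Tsirelson argument, is to show that every non-pair diagram contributes $o(\sigma_p^m)$. Such a $\Gamma$ either supports a connected component of size $\ge 3$ or contains a vertex of degree $\ge 3$; in either case I would bound all $|\mathcal{K}_p|$-factors outside a carefully chosen spanning subtree by $1$ and then integrate out leaves one by one using $\int|\mathcal{K}_p(x,\cdot)|\,d\mu\le\kappa_p$, gaining strictly more than $m/2$ factors of $\kappa_p$ and producing the estimate $|W_\Gamma(p)|\le C\kappa_p^{m/2+1}$. Together with $\sigma_p^m\asymp\kappa_p^{m/2}$ and the decay $\kappa_p\to 0$ from condition~(ii), this forces every non-pair contribution to vanish after normalization. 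Hence every moment of $\widetilde{\mathcal{I}}_p$ converges to the corresponding moment of $\mathcal{N}_\mathbb{R}(0,1)$, whence the weak convergence. The final refinement --- that condition~(i) is needed only at $\alpha=1$ when $f$ is increasing --- will follow from a monotone-rearrangement / positive-correlation argument establishing $V_\alpha(p)\gtrsim V_1(p)$ for all $\alpha\ge 1$ in the monotone case.
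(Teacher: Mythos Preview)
The paper does not actually prove this theorem: it is quoted from Sodin--Tsirelson \cite{ST}, and the only comment on the proof is the single sentence ``Their proof combines the classical method of moments with the diagram technique.'' Your proposal is precisely a fleshed-out sketch of that Sodin--Tsirelson argument (Laguerre/Wiener chaos expansion, truncation, Wick/diagram formula for moments, spanning-tree estimate killing non-pair diagrams), so there is nothing to compare against in the present paper beyond that one-line description, with which your approach is fully consistent.

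One small correction to your sketch: in the last sentence on the monotone case, the inequality $V_\alpha(p)\gtrsim V_1(p)$ is in the wrong direction, since $|\mathcal{K}_p|\le 1$ forces $|\mathcal{K}_p|^{2\alpha}\le|\mathcal{K}_p|^{2}$ and hence $V_\alpha(p)\le V_1(p)$ for $\alpha\ge 1$. The actual reduction in \cite{ST} is not via an inequality between the $V_\alpha$'s but via the observation that monotonicity of $f$ forces the first Laguerre coefficient $c_1=\int_0^\infty f(\sqrt{t})(1-t)e^{-t}\,dt$ to be nonzero; then $\sigma_p^2\ge c_1^2\,V_1(p)\gtrsim\kappa_p$ follows from condition~(i) at $\alpha=1$ alone, which is all that is needed for the truncation and the diagram estimates. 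With that fix your outline is correct.
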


Their proof combines the classical method of moments with the diagram technique, which enables the computation of moments of nonlinear functionals and their comparison with those of the standard Gaussian distribution. We also remark that condition (ii) ensures the asymptotic vanishing of the variance, i.e., $\mathrm{Var}[\mathcal{I}_{p}^{\phi}(G_{p})] \to 0$ as $p \to \infty$.

\vspace{0.9 em}

In order to prove asymptotic normality for $\mathcal{M}_p^{\phi}(s_p)$, we begin by decomposing it as
\begin{equation}\label{Relation}
    \mathcal{M}_p^{\phi}(s_p) = \mathcal{F}_p^{\phi}(s_p) + \mathcal{R}_p^{\phi}(s_p),
\end{equation}
where
\begin{equation}
    \mathcal{F}_p^{\phi}(s_p) := \int_{X} \frac{|s_p(x)|_{h_p}^2}{B_p(x)} \phi(x) dV(x),
\end{equation}
and
\begin{equation}
    \mathcal{R}_p^{\phi}(s_p) := \int_{X} |s_p(x)|_{h_p}^2 \left(\frac{1}{A_p^n} - \frac{1}{B_p(x)}\right) \phi(x) dV (x).
\end{equation}

Using the asymptotic relation $B_p(x) = A_p^n(1 + o(1))$, it follows that
\begin{equation}\label{maincont}
    \mathrm{Var}[\mathcal{F}_p^{\phi}(s_p)] \sim \mathrm{Var}[\mathcal{M}_p^{\phi}(s_p)],
\end{equation}
i.e., $\mathrm{Var}[\mathcal{F}_p^{\phi}(s_p)] / \mathrm{Var}[\mathcal{M}_p^{\phi}(s_p)] \to 1$ as $p \to \infty$.

Furthermore, since $1/B_p(x) = 1/A_p^n + o(1/A_p^n)$, and following similar steps as in the proof of Theorem \ref{varmass}, we obtain
\begin{equation}\label{remainder}
    \mathrm{Var}[\mathcal{R}_p^{\phi}(s_p)] = o\left(\frac{1}{A_p^n}\right).
\end{equation}

By employing the result of Sodin and Tsirelson, we now establish the following asymptotic normality result for the random variables $\mathcal{F}_p^{\phi}(s_p)$.

\begin{prop}\label{prop}
Under the same assumptions as in Theorem \ref{maint}, the distributions of the random variables
\begin{equation}\label{clte}
    \frac{\mathcal{F}_p^{\phi}(s_p) - \mathbb{E}[\mathcal{F}_p^{\phi}(s_p)]}{\sqrt{\mathrm{Var}[\mathcal{F}_p^{\phi}(s_p)]}}
\end{equation}
converge weakly to the standard Gaussian distribution $\mathcal{N}_{\mathbb{R}}(0, 1)$ as $p \to \infty$.
\end{prop}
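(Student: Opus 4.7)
The plan is to apply the Sodin--Tsirelson theorem (Theorem~\ref{sots}) with $(\Omega,\mu)=(X,dV)$, the increasing function $f(r)=r^{2}$, and the real test function $\phi$. First I would realize $\mathcal{F}_{p}^{\phi}(s_p)$ in the form $\mathcal{I}_{p}^{\phi}(G_p)$ of \eqref{nonlin} by introducing a scalar normalized Gaussian process: after fixing a measurable unit section $u_p$ of $L_p$ (defined outside a null set, which is enough since only moduli will enter the Sodin--Tsirelson hypotheses), write $S_{j}^{p}=\widetilde{S_{j}^{p}}\,u_p$ and set $\psi_{j}^{p}:=\widetilde{S_{j}^{p}}/\sqrt{B_p}$. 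Then $G_p(x):=\sum_{j} a_{j}^{p}\psi_{j}^{p}(x)$ is a normalized complex Gaussian process on $(X,dV)$ with $\sum_{j}|\psi_{j}^{p}|^{2}\equiv 1$, $|G_p(x)|^{2}=|s_p(x)|_{h_p}^{2}/B_p(x)$, and covariance of modulus $|\mathcal{K}_p(x,y)|=N_p(x,y)$. Consequently
$$
\mathcal{F}_p^{\phi}(s_p)=\int_X |G_p(x)|^{2}\phi(x)\,dV(x)=\mathcal{I}_p^{\phi}(G_p),
$$
and since $r^{2}$ is increasing only the Sodin--Tsirelson conditions (i) at $\alpha=1$ and (ii) need to be verified.

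Both conditions reduce to the asymptotic evaluation of $\int_X N_p(x,y)^{q}\,dV(y)$ for $q=1,2$, uniformly in $x\in X$, which I would carry out by the same splitting used in the proof of Theorem~\ref{varmass}: on the far region $\{d(x,y)\ge b\log A_p/\sqrt{A_p}\}$ the off-diagonal decay (Theorem~\ref{bkoff}) combined with the diagonal lower bound (Theorem~\ref{bkdiag}) gives $N_p(x,y)=O(A_p^{-n-1})$ for $b$ sufficiently large, hence a negligible contribution; on the near-diagonal region the change of variables $y=x+v/\sqrt{A_p}$ together with Theorem~\ref{normbk} and the limit \eqref{lamdas} yields
$$
\sup_{x\in X}\int_X N_p(x,y)^{q}\,dV(y)=\frac{C_q}{A_p^{n}}\bigl(1+o(1)\bigr),\qquad C_q=\int_{\mathbb{C}^n} e^{-q\pi|v|^{2}/2}\,dV_E(v)>0.
$$
Condition (ii) is then immediate. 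For condition (i) at $\alpha=1$, the same splitting together with the uniform continuity of $\phi$ produces the numerator
$$
\int_{X\times X} N_p(x,y)^{2}\phi(x)\phi(y)\,dV(x)\,dV(y)=\frac{C_2}{A_p^{n}}\int_X \phi(x)^{2}\,dV(x)+o(A_p^{-n}).
$$
Assuming $\phi\not\equiv 0$ (otherwise the statement is vacuous since the denominator in \eqref{clte} would degenerate), this is of the same order $A_p^{-n}$ as the denominator, so the ratio stays bounded below by a positive constant.

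The only genuine subtlety is the preparatory step of setting up the scalar process $G_p$, since $L_p$ in general admits no global continuous unit section; this is handled by the observation that the Sodin--Tsirelson criteria depend solely on $|\mathcal{K}_p|=N_p$, so any measurable trivialization (or local gluing via the reference cover) suffices. All remaining estimates are essentially a rerun of the near/far-diagonal split already exploited in Theorem~\ref{varmass}, after which Theorem~\ref{sots} directly yields the weak convergence of the normalized statistic \eqref{clte} to $\mathcal{N}_{\mathbb{R}}(0,1)$.
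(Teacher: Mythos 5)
Your proposal is correct and follows essentially the same route as the paper: the same construction of the scalar normalized Gaussian process $G_p$ via a measurable unit frame so that $|\mathcal{K}_p|=N_p$ and $\mathcal{I}_p^{\phi}(G_p)=\mathcal{F}_p^{\phi}(s_p)$ with $f(r)=r^2$, followed by the same far/near-diagonal splitting to verify the Sodin--Tsirelson hypotheses. The only (harmless) deviations are that you invoke the increasing-$f$ clause to check condition (i) solely at $\alpha=1$, where the paper verifies it for all $\alpha\in\mathbb{N}$, and that you make explicit the standing assumption $\phi\not\equiv 0$.
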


\begin{proof}
We begin by adapting the standard description of Gaussian processes in the complex plane to our setting. For each $p\in\mathbb{N}$, we construct normalized Gaussian processes $G_p$ on $X$ as follows. First, choose a measurable section  $e_{L_p}: X\to L_p,$ satisfying $|e_{L_p}(x)|_{h_p}=1$ for every $x\in X$. Next, select an orthonormal basis $\{S_j^p\}_{j=1}^{d_p}$ of $H^0(X,L_p)$, and write each basis element as $S_j^p = \sigma_j^p\, e_{L_p}.$
Define
\begin{equation}\label{pro_new}
F_j^p(x)=\frac{\sigma_j^p(x)}{\sqrt{B_p(x)}}, \quad j=1,\dots,d_p,
\end{equation}
so that, by (\ref{diags}), we have  $\sum_{j=1}^{d_p} |F_j^p(x)|^2 = 1.$ Then a normalized complex Gaussian process on $X$ is given by
\begin{equation}\label{cgp_new}
G_p(x)=\sum_{j=1}^{d_p} a_j^p\, F_j^p(x),
\end{equation}
where the $a_j^p$ are i.i.d. centered complex Gaussian random variables with variance one. Notice that a random holomorphic section 
$s_p=\sum_{j=1}^{d_p} a_j^p\, S_j^p$
can be rewritten as
\begin{equation}\label{sect_new}
s_p(x)=\sqrt{B_p(x)}\, G_p(x)\, e_{L_p}(x),
\end{equation}
which immediately implies
\begin{equation}\label{norm_new}
|G_p(x)|=\frac{|s_p(x)|_{h_p}}{\sqrt{B_p(x)}}.
\end{equation}

We now compute the covariance function of $G_p$. Since the coefficients $a_j^p$ are independent, centered, and satisfy
$\mathbb{E}[|a_j^p|^2]=1,\, \, \mathbb{E}[a_k^p\,\overline{a_l^p}]=0 \, \, \text{for } k\neq l,$
it follows from (\ref{cgp_new}) that
\begin{equation}\label{cov1_new}
\mathcal{K}_p(x,y)=\mathbb{E}\bigl[G_p(x)\,\overline{G_p(y)}\bigr]
=\sum_{j=1}^{d_p} F_j^p(x)\,\overline{F_j^p(y)}.
\end{equation}

Observe that, a standard computation yield
\begin{equation}\label{normb_new}
|K_p(x,y)|_{h_{p,x}\otimes h^*_{p,y}}=\Bigl|\sum_{j=1}^{d_p}\sigma_j^p(x)\,\overline{\sigma_j^p(y)}\Bigr|.
\end{equation}
Combining (\ref{cov1_new}), (\ref{normb_new}) and (\ref{pro_new}), we deduce that
\begin{equation}\label{coveb_new}
N_p(x,y)= |\mathcal{K}_p(x,y)|.
\end{equation}

Next, let $f(r)=r^2$ and identify $(\Omega,\mu)$ with $(X,dV)$. Fix a real-valued continuous function $\phi$ on $X$. Then the nonlinear random functional defined in (\ref{nonlin}) becomes
\begin{equation}\label{samev_new}
\mathcal{I}_p^\phi(G_p)=\int_X|G_p(x)|^2\,\phi(x)\,dV(x)
=\int_X\frac{|s_p(x)|_{h_p}^2}{B_p(x)}\,\phi(x)\,dV(x)
=\mathcal{F}_p^\phi(s_p).
\end{equation}

To verify conditions (i) and (ii) of Theorem \ref{sots}, we split the integration domain into two parts: one where $d(x,y)\ge b\,\frac{\log A_p}{\sqrt{A_p}}$ (the far off-diagonal region) and one where $d(x,y)\le b\,\frac{\log A_p}{\sqrt{A_p}}$ (the near-diagonal region).

First, for the far off-diagonal region, Theorem \ref{bkoff} yields
\begin{equation*}
\lim_{p\to\infty}\sup_{x\in X}\int_{d(x,y)\ge b\,\frac{\log A_p}{\sqrt{A_p}}}N_p(x,y)\,dV(y)
\le \lim_{p\to\infty}\sup_{x\in X}\int_{d(x,y)\ge b\,\frac{\log A_p}{\sqrt{A_p}}}C_1\,e^{-C_2\sqrt{A_p}\,d(x,y)}\,dV(y)=0.
\end{equation*}
For the near-diagonal region, using (\ref{coveb_new}) and the fact that $|\mathcal{K}_p (x,y)|\leq 1$, we obtain
\begin{equation*}
\lim_{p\to\infty}\sup_{x\in X}\int_{d(x,y)\le b\,\frac{\log A_p}{\sqrt{A_p}}}N_p(x,y)\,dV(y)
\le \lim_{p\to\infty}\sup_{x\in X}\int_{d(x,y)\le b\,\frac{\log A_p}{\sqrt{A_p}}}1\,dV(y)=0.
\end{equation*}

Next, we address condition (i). On the far off-diagonal set where 
$d(x,y)\ge b\frac{\log A_p}{\sqrt{A_p}},$
the integrand in the numerator decays at a rate $O(A_p^{-\delta})$, whereas the denominator decays like $O(A_p^{-\delta/2})$ (by Theorem \ref{bkoff}), ensuring that the ratio tends to zero as $p\to\infty$.

Finally, we analyze the near-diagonal region. By linearization on the polydisk $P^n(x,R)\subset U_j$ (with K\"{a}hler coordinates provided by Lemma \ref{refe}) and fixing $x\in X$ in the support of $\phi$ where the supremum in the denominator is attained, for $\alpha\in \Bbb{N}$ we verify that
\begin{equation}\label{st22_new}
\liminf_{p\to\infty}\frac{\displaystyle \int_X\int_{\|v\|\le b\log A_p}N_p^{2\alpha}\Bigl(x,x+\frac{v}{\sqrt{A_p}}\Bigr)\,\phi(x)\,\phi\Bigl(x+\frac{v}{\sqrt{A_p}}\Bigr)\,dv\,dV(x)}
{\displaystyle \int_{\|v\|\le b\log A_p}N_p\Bigl(x,x+\frac{v}{\sqrt{A_p}}\Bigr)\,dv} > 0.
\end{equation}
Employing the near-diagonal asymptotics for $N_p(x,y)$, as given in Theorem \ref{normbk}, this limit becomes
\begin{equation}\label{st22_alt}
\liminf_{p\to\infty}\frac{\displaystyle \int_X\phi(x)\int_{\|v\|\le b\log A_p}\exp\Bigl(-2\alpha\sum_{j=1}^n\frac{\lambda_j^p}{A_p}|v_j|^2\Bigr)(1+o(1))\,\phi\Bigl(x+\frac{v}{\sqrt{A_p}}\Bigr)\,dv\,dV(x)}
{\displaystyle \int_{\|v\|\le b\log A_p}\exp\Bigl(-\sum_{j=1}^n\frac{\lambda_j^p}{A_p}|v_j|^2\Bigr)(1+o(1))\,dv}. 
\end{equation}
Since as $p\to\infty$ we have $\lambda_j^p/A_p\to\pi/2$ and $\phi(x+v/\sqrt{A_p})\to\phi(x)$ uniformly on compact sets in $\mathbb{C}^n$, we obtain
\begin{equation*}
\frac{\displaystyle \int_X \phi^2(x)\,dV(x) \int_{\mathbb{C}^n}\exp\Bigl(-\alpha\pi\sum_{j=1}^n|v_j|^2\Bigr)\,dv}
{\displaystyle \int_{\mathbb{C}^n}\exp\Bigl(-\frac{\pi}{2}\sum_{j=1}^n|v_j|^2\Bigr)\,dv}
=\frac{1}{2^{\alpha n}}\int_X \phi^2(x)\,dV(x) > 0,
\end{equation*}
which completes the verification of (i) and hence the proof.
\end{proof}

\begin{lem} \label{maincontclt}
The random variables
\begin{equation}
    \frac{\mathcal{F}_p^\phi(s_p) - \mathbb{E}[\mathcal{F}_p^\phi(s_p)]}{\sqrt{\mathrm{Var}[\mathcal{M}_p^\phi(s_p)]}}
\end{equation}
converge in distribution to the standard Gaussian random variable $\mathcal{N}_{\mathbb{R}}(0,1)$ as $p \to \infty$.
\end{lem}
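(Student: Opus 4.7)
The plan is to deduce this lemma from Proposition \ref{prop} by a simple Slutsky-type argument, using the variance comparison in \eqref{maincont}. The starting observation is that Proposition \ref{prop} already supplies a central limit theorem for the same numerator $\mathcal{F}_p^\phi(s_p) - \mathbb{E}[\mathcal{F}_p^\phi(s_p)]$, but with the ``correct'' normalization $\sqrt{\mathrm{Var}[\mathcal{F}_p^\phi(s_p)]}$. So the only task is to show that replacing this denominator with $\sqrt{\mathrm{Var}[\mathcal{M}_p^\phi(s_p)]}$ does not alter the limiting distribution.

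To implement this, I would write the algebraic identity
\begin{equation*}
  \frac{\mathcal{F}_p^\phi(s_p) - \mathbb{E}[\mathcal{F}_p^\phi(s_p)]}{\sqrt{\mathrm{Var}[\mathcal{M}_p^\phi(s_p)]}}
  \;=\;
  \underbrace{\frac{\mathcal{F}_p^\phi(s_p) - \mathbb{E}[\mathcal{F}_p^\phi(s_p)]}{\sqrt{\mathrm{Var}[\mathcal{F}_p^\phi(s_p)]}}}_{=:X_p}
  \;\cdot\;
  \underbrace{\sqrt{\frac{\mathrm{Var}[\mathcal{F}_p^\phi(s_p)]}{\mathrm{Var}[\mathcal{M}_p^\phi(s_p)]}}}_{=:c_p}.
\end{equation*}
By Proposition \ref{prop}, $X_p$ converges in distribution to $\mathcal{N}_{\mathbb{R}}(0,1)$. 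By the asymptotic equivalence \eqref{maincont} of the two variances, the deterministic factor $c_p$ tends to $1$ as $p \to \infty$. Slutsky's theorem then yields that the product $X_p \cdot c_p$ converges in distribution to $1 \cdot \mathcal{N}_{\mathbb{R}}(0,1) = \mathcal{N}_{\mathbb{R}}(0,1)$, which is the claim.

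The only point that deserves a brief comment is why the two variances are genuinely comparable and nondegenerate, so that the ratio $c_p$ is well defined and bounded away from $0$ and $\infty$ for large $p$. This is exactly the content of \eqref{maincont}, which was derived from the uniform asymptotics $B_p(x) = A_p^n(1+o(1))$ of Theorem \ref{bkdiag}, combined with the variance computation of Theorem \ref{varmass} showing that $\mathrm{Var}[\mathcal{M}_p^\phi(s_p)]$ is of exact order $A_p^{-n}$ whenever $\phi \not\equiv 0$. There is essentially no obstacle in this argument; the whole substance of the lemma has already been absorbed into Proposition \ref{prop} and the variance estimate \eqref{maincont}, and the present statement is the clean reformulation needed to combine $\mathcal{F}_p^\phi$ with the remainder $\mathcal{R}_p^\phi$ from \eqref{Relation} in the proof of Theorem \ref{maint}.
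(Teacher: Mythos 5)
Your proposal is correct and follows essentially the same route as the paper: the authors also factor the quantity as $\beta_p\,\widetilde{\mathcal{F}}_p^\phi(s_p)$ with $\beta_p=\sqrt{\mathrm{Var}[\mathcal{F}_p^\phi]/\mathrm{Var}[\mathcal{M}_p^\phi]}\to 1$ by \eqref{maincont} and then transfer the limit from Proposition \ref{prop}. The only cosmetic difference is that you invoke Slutsky's theorem where the paper verifies the same conclusion by hand via the distribution functions $\mathbf{G}_p(t)=\mathbf{F}_p(t/\beta_p)\to\mathbf{\Phi}(t)$.
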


\begin{proof}
Observe that we can write
\begin{equation} \label{rewrite}
    \frac{\mathcal{F}_p^\phi(s_p) - \mathbb{E}[\mathcal{F}_p^\phi(s_p)]}{\sqrt{\mathrm{Var}[\mathcal{M}_p^\phi(s_p)]}} = \beta_p \widetilde{\mathcal{F}}_p^\phi(s_p),
\end{equation}
where
$\widetilde{\mathcal{F}}_p^\phi(s_p) = \frac{\mathcal{F}_p^\phi(s_p) - \mathbb{E}[\mathcal{F}_p^\phi(s_p)]}{\sqrt{\mathrm{Var}[\mathcal{F}_p^\phi(s_p)]}} \,\, \text{and}\,\,
\beta_p = \sqrt{\frac{\mathrm{Var}[\mathcal{F}_p^\phi(s_p)]}{\mathrm{Var}[\mathcal{M}_p^\phi(s_p)]}}.
$
From the variance asymptotics in (\ref{maincont}), $\beta_p \to 1$ as $p \to \infty$. By Proposition \ref{prop}, $\widetilde{\mathcal{F}}_p^\phi(s_p)$ converges in distribution to $\mathcal{N}_\mathbb{R} (0,1)$. Denoting the distribution function of $\widetilde{\mathcal{F}}_p^\phi(s_p)$ by $\mathbf{F}_p$, for each $t\in \mathbb{R}$ we have
\begin{equation} \label{cltconv}
    \mathbf{F}_p(t) = \mathbb{P}_p\left(\widetilde{\mathcal{F}}_p^\phi(s_p) \leq t\right) \to \mathbf{\Phi}(t),
\end{equation}
where $\mathbf{\Phi}(t)=\int_{-\infty}^t \frac{1}{\sqrt{2\pi}} e^{-\frac{s^2}{2}} \, ds$ is the standard Gaussian distribution function.

Now consider the distribution function of the scaled random variable $\beta_p \widetilde{\mathcal{F}}_p^\phi(s_p)$, denoted by $\mathbf{G}_p$. By definition,
\begin{equation*}
\mathbf{G}_p(t) = \mathbb{P}_p\left(\beta_p \widetilde{\mathcal{F}}_p^\phi(s_p) \leq t\right)=\mathbb{P}_p\left(\widetilde{\mathcal{F}}_p^\phi(s_p) \leq \frac{t}{\beta_p}\right)=\mathbf{F}_p\left(\frac{t}{\beta_p}\right).
\end{equation*}

Since $\beta_p \to 1$, it follows that $\frac{t}{\beta_p} \to t$ for all $t \in \mathbb{R}$. Combining this with continuity of $\mathbf{\Phi}$ and (\ref{cltconv}), it follows that
\begin{equation*}
\mathbf{G}_p(t) \to \mathbf{\Phi}(t) \quad \text{as } p \to \infty, \text{ for all } t \in \mathbb{R}.
\end{equation*}

Thus, the sequence of random variables in (\ref{rewrite}) converges in distribution to $\mathcal{N}_{\mathbb{R}}(0,1)$, as required.
\end{proof}

\begin{proof}[Proof of Theorem \ref{maint}]
    Using the decomposition (\ref{Relation}), we write
    \begin{equation}
        \widehat{\mathcal{M}}_p = \widehat{\mathcal{M}}^{(1)}_p + \widehat{\mathcal{M}}^{(2)}_p,
    \end{equation}
    where
    \begin{equation*}
    \widehat{\mathcal{M}}_p = \frac{\mathcal{M}_p^\phi(s_p) - \mathbb{E}[\mathcal{M}_p^\phi(s_p)]}{\sqrt{\mathrm{Var}[\mathcal{M}_p^\phi(s_p)]}}, \quad
    \widehat{\mathcal{M}}^{(1)}_p = \frac{\mathcal{F}_p^\phi(s_p) - \mathbb{E}[\mathcal{F}_p^\phi(s_p)]}{\sqrt{\mathrm{Var}[\mathcal{M}_p^\phi(s_p)]}}, \quad
    \widehat{\mathcal{M}}^{(2)}_p = \frac{\mathcal{R}_p^\phi(s_p) - \mathbb{E}[\mathcal{R}_p^\phi(s_p)]}{\sqrt{\mathrm{Var}[\mathcal{M}_p^\phi(s_p)]}}.
    \end{equation*}

    Let $\mathbf{H}_p$, $\mathbf{H}_p^1$, and $\mathbf{\Phi}$ denote the distribution functions of $\widehat{\mathcal{M}}_p$, $\widehat{\mathcal{M}}^{(1)}_p$, and $Z \sim \mathcal{N}_\mathbb{R}(0,1)$, respectively. Fix $t \in \mathbb{R}$ and let $\epsilon > 0$. Then, since
    \begin{equation}
        \mathbf{H}_p(t) = \mathbb{P}_p\left(\widehat{\mathcal{M}}^{(1)}_p + \widehat{\mathcal{M}}^{(2)}_p \leq t\right).
    \end{equation}
    We can rewrite it as 
    \begin{equation}\label{decomp}
        \mathbf{H}_p(t) = \mathbb{P}_p\left(\widehat{\mathcal{M}}^{(1)}_p \leq t - \widehat{\mathcal{M}}^{(2)}_p\right) = \mathbb{P}_p(E_p^1) + \mathbb{P}_p(E_p^2),
    \end{equation}
    where
    \begin{equation*}
    E_p^1 = \left\{\widehat{\mathcal{M}}^{(1)}_p \leq t - \widehat{\mathcal{M}}^{(2)}_p\right\} \cap \left\{|\widehat{\mathcal{M}}^{(2)}_p| \geq \epsilon\right\}, \quad
    E_p^2 = \left\{\widehat{\mathcal{M}}^{(1)}_p \leq t - \widehat{\mathcal{M}}^{(2)}_p\right\} \cap \left\{|\widehat{\mathcal{M}}^{(2)}_p| < \epsilon\right\}.
    \end{equation*}

    For the first term in \eqref{decomp}, applying Chebyshev’s inequality, we get
    \begin{equation} \label{cheb}
        \mathbb{P}_p(E_p^1) \leq \mathbb{P}_p\left(|\widehat{\mathcal{M}}^{(2)}_p| \geq \epsilon\right) \leq \frac{1}{\epsilon^2} \frac{\mathrm{Var}[\mathcal{R}_p^\phi(s_p)]}{\mathrm{Var}[\mathcal{M}_p^\phi(s_p)]}.
    \end{equation}
    Using (\ref{remainder}) and (\ref{maincont}), it follows that
    \begin{equation} \label{varr}
    \frac{\mathrm{Var}[\mathcal{R}_p^\phi(s_p)]}{\mathrm{Var}[\mathcal{M}_p^\phi(s_p)]} \to 0 \quad \text{as } p \to \infty,
    \end{equation}
    and hence $\mathbb{P}_p(E_p^1) \to 0$ as $p\rightarrow \infty$.

    For the second term in \eqref{decomp}, note that
   \begin{equation}\label{bounds1}
        \mathbb{P}_p(E_p^2) \leq \mathbb{P}_p\left(\left\{\widehat{\mathcal{M}}^{(1)}_p \leq t + \epsilon\right\} \cap \left\{|\widehat{\mathcal{M}}^{(2)}_p| < \epsilon\right\}\right) \leq \mathbb{P}_p\left(\widehat{\mathcal{M}}^{(1)}_p \leq t + \epsilon\right) = \mathbf{H}_p^1(t + \epsilon),
   \end{equation}
    and
    \begin{equation}\label{bounds2}
        \mathbb{P}_p(E_p^2) \geq \mathbb{P}_p\left(\widehat{\mathcal{M}}^{(1)}_p \leq t - \epsilon\right) - \mathbb{P}_p\left(|\widehat{\mathcal{M}}^{(2)}_p| \geq \epsilon\right) 
        = \mathbf{H}_p^1(t - \epsilon) - \mathbb{P}_p\left(|\widehat{\mathcal{M}}^{(2)}_p| \geq \epsilon\right).
    \end{equation}
    From Lemma \ref{maincontclt}, we know that $\widehat{\mathcal{M}}^{(1)}_p \xrightarrow{d} Z \sim \mathcal{N}_{\mathbb{R}}(0,1)$. In particular, for any $\epsilon>0$
    \begin{equation*}
\mathbf{H}_p^1(t \pm \epsilon) \to \mathbf{\Phi}(t \pm \epsilon) \quad \text{as } p \to \infty.
    \end{equation*}
    On the other hand, by \eqref{cheb} and \eqref{varr} we have $\mathbb{P}_p\left(|\widehat{\mathcal{M}}^{(2)}_p| \geq \epsilon\right) \to 0$ as $p\rightarrow \infty.$ Combining these limits with the bounds in (\ref{bounds1})-(\ref{bounds2}), we conclude that
    \begin{equation*}
    \mathbf{\Phi}(t - \epsilon) \leq \liminf_{p \to \infty} \mathbb{P}_p(E_p^2) \leq \limsup_{p \to \infty} \mathbb{P}_p(E_p^2) \leq \mathbf{\Phi}(t + \epsilon).
    \end{equation*}
    Since $\epsilon > 0$ is arbitrary and $\mathbf{\Phi}$ is continuous, it follows that
    \begin{equation*}
    \mathbb{P}_p(E_p^2) \to \mathbf{\Phi}(t) \quad \text{as } p \to \infty.
    \end{equation*}
    Finally, returning to (\ref{decomp}) and using  $\mathbb{P}_p(E_p^1) \to 0$, we conclude that
    \begin{equation*}
    \mathbf{H}_p(t) \to \mathbf{\Phi}(t) \quad \text{as } p \to \infty.
    \end{equation*}
    Hence, the result follows.
\end{proof}

{}

\end{document}